\newcommand{\Om} {\Omega}
\newcommand {\ep} {\epsilon}
\newcommand {\om} {\omega}
\newcommand {\ii} {\infty}
\newcommand {\al} {\alpha}
\newcommand {\bt} {\beta}
\newcommand {\lb} {\lambda}
\newcommand {\ol} {\overline}
\newcommand {\sm} {\setminus}
\newcommand {\su} {\subset}
\newcommand {\cal} {\mathcal}
\newtheorem{teo}{Theorem}[section]
\newtheorem{pro}{Proposition}[section]
\newtheorem{cor}{Corollary}[section]
\newtheorem{lm}{Lemma}[section]
\theoremstyle{definition}
\newtheorem{rem}{Remark}[section]
\newtheorem{df}{Definition}[section]
\title{Pointwise ergodic theorems in symmetric spaces of measurable functions}
\keywords{infinite measure, Dunford-Schwartz  pointwise ergodic theorem,
fully symmetric Banach space, Return Times theorem, bounded Besicovitch sequence}
\subjclass[2010]{47A35(primary), 37A30(secondary)}
\begin{document}
\date{December 15, 2016}

\begin{abstract}
For a Dunford-Schwartz operator in a fully symmetric space of measurable functions of an arbitrary measure space,
we prove pointwise convergence of the conventional and weighted  ergodic averages.
\end{abstract}

\author{VLADIMIR CHILIN, \ DO\u GAN \c C\" OMEZ, \ SEMYON LITVINOV}
\address{The National University of Uzbekistan, Tashkent, Uzbekistan}
\email{vladimirchil@gmail.com; chilin@ucd.uz}
\address{North Dakota State University, P.O.Box 6050, Fargo, ND, 58108, USA}
\email{dogan.comez@ndsu.edu}
\address{Pennsylvania State University \\ 76 University Drive \\ Hazleton, PA 18202, USA}
\email{snl2@psu.edu}

\maketitle

\section{Introduction}

 The celebrated Dunford-Schwartz and Wiener-Wintner-type ergodic theorems are two of the major
themes of ergodic theory.  Due to their fundamental roles, these theorems have been revisited ever since their first appearance.  For instance, A. Garcia \cite{ga} gave an elegant self-contained proof of Dunford-Schwartz Theorem for $L^1- L^{\infty}-$contractions, and I. Assani \cite{as0,as} extended Bourgain's Return Times theorem to $\sigma-$finite setting. 
In this article, among other results, we extend Dunford-Schwartz and Wiener-Wintner ergodic theorems to  fully symmetric spaces of measurable functions.

We begin by showing, in Section 3, that if one works in a space of real valued measurable functions, then the class of absolute
contractions coincides with the class of Dunford-Schwartz operators, hence one can assume without loss of generality
that the linear operator in question
contracts $L^\ii-$norm. This helps us derive, in Section 4,  Dunford-Schwartz pointwise
ergodic theorem in any fully symmetric space of
functions $E$ in an infinite measure space $\Om$ such that the characteristic function $\chi_\Om \notin E$. Note that, as
it is shown in Section 2 of the article, the class of such spaces $E$ is significantly wider than the class of
$L^p-spaces$, $1\leq p<\ii$.

Section 5 is devoted to extension of weighted Dunford-Schwartz-type ergodic theorems to fully symmetric spaces $E$ with
 $\chi_\Om \notin E$.

In the last, Section 6, of the article we utilize Return Times theorem for $\sigma-$finite measure
to show that Wiener-Wintner ergodic theorem holds in any fully symmetric space $E$ such that $\chi_\Om \notin E$
and with the set sequences $\{ \lb^k\}$, $\lb\in \{ z\in \mathbb C: \ |z|=1\}$, extended to the set all bounded Besicovitch sequences.

\section{Preliminaries}
Let $(\Om,\mu)$ be a complete measure space.
Denote  by $\cal L^0$ ($\cal L_h^0$) the linear space of equivalence classes of almost everywhere (a.e.) finite
complex (respectively, real) valued measurable functions on $\Om$. Let $\chi_E$ be the characteristic function of a set $E\su \Om$.
Denote $\mathbf 1 = \chi_\Om$.
Given $1\leq p\leq \ii$, let $\cal L^p\su \cal L^0$ be
the $L^p-$space equipped with the standard norm $\| \cdot \|_p$.

Assume that $(\Om,\mu)$ is $\sigma$-finite. If $f \in \cal L^1 + \cal L^{\ii}$, then
a {\it non-increasing rearrangement} of $f$ is defined as
$$
\mu_t(f)=\inf \{\lb>0: \ \mu\{|f| > \lb\} \leq t\}, \ \  t>0
$$
(see \cite[Ch.II, \S 2]{kps}).

A Banach space $(E, \| \cdot \|_E)\su \cal L^1 + \cal L^{\ii}$ is called {\it symmetric (fully symmetric)} if
$$
f \in E, \ g \in \cal L^1 + \cal L^{\ii}, \ \mu_t(g)\leq \mu_t(f) \ \ \forall \ t>0
$$
(respectively,
$$
f \in E, \ g \in \cal L^1 + \cal L^{\ii}, \ \int \limits_0^s\mu_t(g)dt\leq  \int \limits_0^s\mu_t(f)dt \ \ \forall \ s>0
 \ (\text {writing } \  g \prec\prec f))
$$
implies that $g \in E$ and $\| g\|_E\leq \| f\|_E$.

Simple examples of fully symmetric spaces are $\cal L^1\cap \cal L^{\infty}$  with the norm
$$
\|f\|_{\cal L^1\cap \cal L^{\infty}}=\max \left \{ \|f\|_1, \|f\|_{\infty} \right\}
$$
and $\cal L^1 + \cal L^{\infty}$ with the norm
$$
\|f\|_{\cal L^1 + \cal L^{\infty}}=\inf \left \{ \|g\|_1+ \|h\|_{\infty}: \ f = g + h, \ g \in \cal L^1, \ h \in \cal L^{\infty} \right \}=
\int_0^1 \mu_t(f) dt
$$
(see \cite[Ch. II, \S 4]{kps}).

Denote by $E_+$ the set all nonnegative functions from a symmetric space $E$.
A  symmetric space $(E, \| \cdot \|_E)$ is said to possess
{\it Fatou property} if conditions
$$
\{f_{n}\}\su E_+, \ \ f_{n}\leq f_{n+1}  \ \forall \ n, \ \ \sup_n \| f_{n}\|_E<\ii
$$
imply that there exists $f=\sup \limits_{n}f_{n}\in E_+$ and $\| f\|_E=\sup \limits_{n} \| f_{n}\|_E$.

It is know that if $E = E^{\times\times}$, where
$$
E^\times = \{g \in \cal L^1 + \cal L^{\infty}: \ \|g\|_{E^{\times}}=\sup\limits_{\|f\|_E\leq 1}\int_{\Om} |f\cdot g| d\mu < \infty \}
$$
is the {\it K\"othe  dual space} of $E$, \ then  symmetric space $(E, \| \cdot \|_E)$  has
Fatou property (see, for example, \cite[Vol.II, Ch.I, \S 1b]{lt}).
Since $(\cal L^1 + \cal L^{\infty})^{\times\times} = \cal L^1 + \cal L^{\infty}$  and
$(\cal L^1\cap \cal L^{\infty})^{\times\times} = \cal L^1\cap \cal L^{\infty}$ \cite[Ch.2, \S 6, Theorem 6.4]{bs},
the spaces $(\cal L^1 + \cal L^{\infty}, \|\cdot\|_{\cal L^1 + \cal L^{\infty}})$  and  $(\cal L^1\cap \cal L^{\infty},
\|\cdot\|_{\cal L^1 \cap \cal L^{\infty}})$  possess  Fatou property.

A sequence $ \{f_{n}\} \subset \cal L^0$ is said to converge to $ f \in \cal L^0$ in {\it measure topology} $t_\mu$ if
$f_{n} \chi_E \to f\chi_E$ in measure $\mu$ whenever $\mu(E) < \infty$. It is clear that  $f_{n} \to f$ a.e. implies
$f_{n} \to f$ in $t_\mu$.
Note that in the case $\sigma-$finite measure $\mu$ the algebra $(\cal L^0, t_\mu)$  is a complete  metrizable topological algebra.

In what follows we rely on the fact that any symmetric space with Fatou property is fully symmetric and its unit ball is closed in $t_\mu$ \cite[Ch.IV, \S 3, Lemma 5]{ka}.

Define
$$
\cal R_\mu= \{f \in \cal L^1 + \cal L^{\infty}: \ \mu_t(f) \to 0 \text{ \ as \ } t\to \ii\}.
$$
By \cite[Ch.II, \S 4, Lemma 4.4]{kps}, $(\cal R_\mu, \|\cdot\|_{\cal L^1 + \cal L^\ii})$ is a symmetric space. In addition,
$\cal R_\mu$ is the closure of $\cal L^1\cap \cal L^{\infty}$  in $\cal L^1 + \cal L^{\infty}$ (see \cite[Ch.II, \S 3, Section 1]{kps}).
Furthermore, it follows from definitions of $\cal R_\mu$ and $\| \cdot \|_{\cal L^1 + \cal L^{\infty}}$ that if
$$
f \in \cal R_\mu, \ g \in L^1+L^{\infty} \text{ \ and \ } g \prec\prec f,
$$
then $g\in \cal R_\mu$ and $\| g\|_{\cal L^1 + \cal L^{\infty}} \leq \| f\|_{\cal L^1 + \cal L^{\infty}}$. Therefore
$(\cal R_\mu, \|\cdot\|_{\cal L^1 + \cal L^{\infty}})$ is a   fully symmetric space.
It is clear that if $\mu(\Om) < \infty$, then $\cal R_\mu = \cal L^1$.

\begin{pro}\label{p21}
If $\mu(\Om) = \infty$, then a symmetric space $E \su \cal L^1+\cal L^{\infty}$ is contained in
$\cal R_\mu$ if and only if $\mathbf 1\notin E$.
\end{pro}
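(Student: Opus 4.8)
The plan is to deduce both implications from a single computation, namely that $\mu_t(\mathbf 1)\equiv 1$ whenever $\mu(\Om)=\ii$, combined with the monotonicity of the non-increasing rearrangement. First I would record that for $0<\lb<1$ one has $\{|\mathbf 1|>\lb\}=\Om$, so $\mu\{|\mathbf 1|>\lb\}=\ii>t$ for every $t>0$, whereas for $\lb\geq 1$ the set $\{|\mathbf 1|>\lb\}$ is empty; hence $\mu_t(\mathbf 1)=1$ for all $t>0$, and in particular $\mathbf 1\notin \cal R_\mu$. This step is the only place where the hypothesis $\mu(\Om)=\ii$ is used in an essential way (on a finite measure space $\mu_t(\mathbf 1)=0$ for $t\geq\mu(\Om)$, and the statement fails).

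The forward implication is then immediate: if $E\su \cal R_\mu$, then $\mathbf 1\notin \cal R_\mu$ forces $\mathbf 1\notin E$.

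For the converse I would argue by contraposition. Assume $E\not\su \cal R_\mu$ and pick $f\in E\sm \cal R_\mu$. Since $t\mapsto \mu_t(f)$ is non-increasing and nonnegative, the limit $c:=\lim_{t\to\ii}\mu_t(f)=\inf_{t>0}\mu_t(f)$ exists, and $f\notin \cal R_\mu$ means precisely that $c>0$. Therefore the constant function $g=c\,\mathbf 1$ satisfies $\mu_t(g)=c\leq \mu_t(f)$ for all $t>0$, and $g\in \cal L^\ii\su \cal L^1+\cal L^\ii$, so the defining property of a symmetric space applied to $f\in E$ and $g$ yields $g\in E$; as $E$ is a linear space and $c>0$, we get $\mathbf 1=c^{-1}g\in E$. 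This contradicts $\mathbf 1\notin E$, completing the proof.

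I do not anticipate a real obstacle here. The two points that need a moment of care are the rearrangement computation $\mu_t(\mathbf 1)=1$ and the verification that the comparison function $c\,\mathbf 1$ lies in $\cal L^1+\cal L^\ii$, which is what makes the symmetric-space axiom applicable.
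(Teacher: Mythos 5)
Your proposal is correct and follows essentially the same route as the paper: the computation $\mu_t(\mathbf 1)\equiv 1$ gives $\mathbf 1\notin\cal R_\mu$ and hence the forward implication, while the converse uses $\lim_{t\to\ii}\mu_t(f)=c>0$ for $f\in E\sm\cal R_\mu$ together with the symmetric-space axiom to force $\mathbf 1\in E$ (the paper compares $\mu_t(\mathbf 1)\leq \frac1{c}\mu_t(f)$ rather than $\mu_t(c\,\mathbf 1)\leq\mu_t(f)$, a purely cosmetic difference). No gaps.
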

\begin{proof}
As  $\mu(\Om) = \infty$, we have $\mu_t(\mathbf 1) = 1$ for all $t > 0$, hence $\mathbf 1 \notin \cal R_\tau$. Therefore
 $ E$ is not contained in $\cal R_\mu$ whenever $\mathbf 1 \in E$.

Let  $\mathbf 1  \notin  E$. If $f \in E$ and $\lim_{t\rightarrow \infty} \mu_t(f) = \alpha >0$, then
$$
\mu_t(\mathbf 1) \equiv 1 \leq \frac1{\alpha} \mu_t(f),
$$
implying $\mathbf 1 \in  E$, a contradiction. Thus $\mathbf 1  \notin  E$ entails $ E \subset  \cal R_\mu$.
\end{proof}

To outline the scope of applications of what follows, we assume that $\mu(\Om) = \infty$ and present some examples of fully symmetric spaces $E$ with $\mathbf 1\notin E$.

1. Let $\Phi$ be an {\it Orlicz function}, that is, $\Phi:[0,\ii)\to [0,\ii)$ is convex, continuous at $0$ and such that $\Phi(0)=0$ and $\Phi(u)>0$ if $u\ne 0$.  Let
$$
L^\Phi=L^\Phi(\Om,\mu)=\left \{ f \in \cal L^0(\Om,\mu): \  \int_{\Om} \left (\Phi\left (\frac {|f|}a \right )\right ) d \mu
<\ii \text { \ for some \ } a>0 \right \}
$$
be the corresponding {\it Orlicz space},  and let
$$
\| f\|_\Phi=\inf \left \{ a>0:  \int_{\Om} \left (\Phi\left (\frac {|f|}a \right )\right ) d \mu \leq 1 \right \}
$$
be the {\it Luxemburg norm} in $L^\Phi$. \ It is well-known that  $(L^\Phi, \| \cdot\|_\Phi)$
is a fully symmetric space  with  Fatou property.
Since  $\mu(\Om) = \infty$, we have $\int_{\Om} \left (\Phi\left (\frac {\mathbf 1}a \right )\right ) d \mu = \ii$ for all
$a>0$, hence $\mathbf 1  \notin   L^\Phi$.

2. A  symmetric space  $(E, \| \cdot \|_E)$ is said to have {\it order continuous norm} if
$$\| f_{n}\|_E\downarrow 0 \ \ \text{whenever} \ \ f_{n}\in E_+ \ \ \text{and} \ \  f_{n}\downarrow 0.
$$
If $E$  is a symmetric space  with order continuous norm, then
$\mu\{|f| > \lambda\}< \infty$  for all $f \in E$ and $\lambda > 0$, so  $E \su \cal R_\mu $; in particular, $\mathbf 1  \notin   E$.

3.  Let $\varphi$ be an increasing  concave function on $[0, \infty)$ with $\varphi(0) = 0$ and
$\varphi(t) > 0$ for some $t > 0$, and let
$$
\Lambda_\varphi=\Lambda_\varphi(\Om,\mu) = \left \{f \in \cal L^0(\Om,\mu): \ \|f \|_{\Lambda_\varphi} =
\int_0^{\infty} \mu_t(f) d \varphi(t) < \infty \right \},
$$
the corresponding {\it Lorentz space}. It is well-known that  $\Lambda_\varphi$
is a fully symmetric space  with  Fatou property; in addition, if $\varphi(\infty) = \ii$, then
$\mathbf 1  \notin  \Lambda_\varphi$.

4. A Banach lattice  $(E,\|\cdot\|_E)$  is called {\it $q$-concave}, $1 \leq q < \infty$, if there exists a
constant  $M>0$ such that
$$
\left (\sum_{i=1}^n\|x_i\|^q\right )^{\frac{1}{q}} \leq M \left \| \left (\sum_{i=1}^n|x_i|^q\right )^{\frac{1}{q}}\right \|_E
$$
for every finite set  $\{x_i\}_{i=1}^n \su E$.
If a Banach lattice  $(E,\|\cdot\|_E)$  is a $q$-concave for some $1 \leq q < \infty$, then  there is no a sublattice of $E$ isomorphic to $l^{\infty}$, and the norm $\| \cdot \|_E$ is order continuous \cite[Corollary 2.4.3]{pm}. Therefore, if a fully symmetric function space $(E, \|\cdot\|_{E})$ is  $q$-concave, then $\mathbf 1\notin E$.

5. Let $(E(0,\ii), \| \cdot \|_{E(0,\ii)})$ be a fully symmetric space. If $s>0$, let the bounded linear operator
$D_s$ in $E(0,\ii)$  be given by $D_s(f)(t) = f(t/s), \ t > 0$.
The  {\it Boyd index}  $q_E$ is defined as
$$
q_E=\lim\limits_{s \to +0}\frac{\log s}{\log \|D_{s}\|}.
$$
It is known that $1\leq q_E\leq \ii$  \cite[Vol.II, Ch.II, \S 2b,  Proposition 2.b.2]{lt}.
Since  $\|D_{s}\| \leq \max\{1,s\}$ \cite[Vol.II, Ch.II, \S 2b]{lt}, $\mathbf 1 \in  E(0,\ii)$ would imply
$D_{s}(\mathbf 1) =\mathbf 1$  and \ $\|D_{s}\| =1$ for all $s \in (0,1)$, hence $q_E = \infty$.
Thus, if  $q_E < \ii$, we have $\mathbf 1\notin E(0,\ii)$.

\vskip 5pt
The next property of the fully symmetric space $\cal R_\mu$ is crucial.

\begin{pro}\label{p22}
For every $f\in \cal R_\mu$ and $\ep>0$ there exist $g_{\ep}\in \cal L^1$  and $h_{\ep}\in \cal L^{\ii}$ such that
$f=g_{\ep}+h_{\ep}$ and $\| h_\ep\|_{\ii}\leq \ep$.
\end{pro}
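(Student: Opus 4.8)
The plan is to exploit the defining property of $\cal R_\mu$, namely that $\mu_t(f)\to 0$ as $t\to\ii$, to truncate $f$ at a suitable level. First I would fix $f\in\cal R_\mu$ and $\ep>0$. Since $\mu_t(f)\to 0$, there exists $t_0>0$ with $\mu_{t_0}(f)\leq\ep$; by the definition of the non-increasing rearrangement as an infimum, this means $\mu\{|f|>\ep\}\leq t_0<\ii$. Set $A_\ep=\{|f|>\ep\}$, so $\mu(A_\ep)<\ii$.

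Next I would define the splitting by hand: let $h_\ep = f\cdot\chi_{\Om\sm A_\ep}$ and $g_\ep = f\cdot\chi_{A_\ep}$, so that $f=g_\ep+h_\ep$. On $\Om\sm A_\ep$ we have $|f|\leq\ep$ pointwise a.e., hence $\|h_\ep\|_\ii\leq\ep$, which is one of the required conclusions. It remains to check $g_\ep\in\cal L^1$ and $h_\ep\in\cal L^\ii$. The latter is immediate from $\|h_\ep\|_\ii\leq\ep<\ii$. For the former, observe that $g_\ep$ is supported on the finite-measure set $A_\ep$, and since $f\in\cal L^1+\cal L^\ii$ one can write $f=u+v$ with $u\in\cal L^1$, $v\in\cal L^\ii$; then $g_\ep = u\chi_{A_\ep}+v\chi_{A_\ep}$, where $u\chi_{A_\ep}\in\cal L^1$ and $v\chi_{A_\ep}\in\cal L^\ii$ is supported on a set of finite measure, hence also in $\cal L^1$. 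Thus $g_\ep\in\cal L^1$, completing the proof.

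I do not expect a serious obstacle here; the only point requiring slight care is the passage from $\mu_{t_0}(f)\leq\ep$ to $\mu\{|f|>\ep\}<\ii$, which follows since $\mu_t(f)=\inf\{\lb>0:\mu\{|f|>\lb\}\leq t\}$ and the distribution function is right-continuous (or, more simply, by noting that if $\mu\{|f|>\ep\}>t_0$ then $\mu_{t_0}(f)\geq\ep$ would be strict in the wrong direction, so one works with $\mu\{|f|>\ep'\}$ for $\ep'$ slightly below $\ep$ and adjusts). A clean alternative avoiding this entirely: pick $t_0$ with $\mu_{t_0}(f)<\ep/2$, which forces $\mu\{|f|>\ep/2\}\leq t_0<\ii$, and then truncate at level $\ep/2$; this gives $\|h_\ep\|_\ii\leq\ep/2\leq\ep$ with no right-continuity worry.
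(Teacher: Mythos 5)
Your proposal is correct and follows essentially the same route as the paper: truncate $f$ on the set $\Om_\ep=\{|f|>\ep\}$, note $\|f\chi_{\Om\sm\Om_\ep}\|_\ii\leq\ep$, and use $\mu(\Om_\ep)<\ii$ (a consequence of $f\in\cal R_\mu$) together with a decomposition $f=u+v$, $u\in\cal L^1$, $v\in\cal L^\ii$, to conclude that the truncated part lies in $\cal L^1$. The extra care you take in deducing $\mu\{|f|>\ep\}<\ii$ from $\mu_t(f)\to 0$ is a point the paper simply asserts, and your $\ep/2$ workaround settles it cleanly.
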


\begin{proof} If
$$
\Om_{\ep}=\{ |f|>\ep\}, \ \ g_{\ep}=f \cdot \chi_{\Om_{\ep}}, \ \ h_{\ep}=f \cdot \chi_{\Om \sm \Om_{\ep}},
$$
then $\| h_{\ep}\|_{\ii}\leq \ep$. Besides, as $f\in \cal L^1+\cal L^{\ii}$, we have
$$
f=g_{\ep}+h_{\ep}=g+h
$$
for some $g\in \cal L^1$, $h\in \cal L^{\ii}$. Then, since $f\in \cal R_\mu$, we have $\mu(\Om_{\ep})<\ii$, which implies that
$$
g_{\ep}=g\cdot \chi_{\Om_{\ep}}+(h-h_{\ep})\cdot \chi_{\Om_{\ep}}\in \cal L^1.
$$
\end{proof}

\section{Dunford-Schwartz operators and absolute contractions}

\begin{df}
A linear operator $T: \cal L^1 + \cal L^{\ii} \to  \cal L^1 + \cal L^{\ii}$ is called a {\it Dunford-Schwartz operator} if
$$
\| T(f)\|_1\leq \| f\|_1 \ \ \forall \ \ f\in L^1 \text{ \ \ and \ \ } \| T(f)\|_{\ii}\leq \| f\|_\ii \ \ \forall \ f \in \cal L^{\ii}.
$$
\end{df}

In what follows, we will write $T\in DS$ ($T\in DS^+$) to indicate that $T$ is a Dunford-Schwartz operator
(respectively, a positive  Dunford-Schwartz operator). It is clear that
$$
\|T\|_{\cal L^1 + \cal L^{\ii} \to \cal  \cal L^1 + \cal L^{\ii}} \leq 1
$$
for all $T\in DS$ and, in addition, $Tf \prec\prec f$ for all $f \in \cal L^1 + \cal L^{\ii}$ \cite[Ch.II, \S 3, Sec.4]{kps}.
Therefore $T(E) \subset E$ for every fully symmetric space
$E$ and
\begin{equation}\label{eq111}
\| T\|_{E \to  E} \leq 1
\end{equation}
(see \cite[Ch.II, \S 4, Sec.2]{kps}).
In particular,  $T(\cal R_\mu) \subset \cal R_\mu$, and the restriction of $T$ on $\cal  R_\mu$ is a linear contraction (also denoted by $T$).

We say that a linear operator $T: \cal L^1\to \cal L^1$ is an {\it absolute contraction} and write $T\in AC$ if
$$
\| T(f)\|_1\leq \| f\|_1 \ \ \forall \ f\in \cal L^1 \text{ \  and \ }  \| T(f)\|_{\ii}\leq \| f\|_{\ii} \ \ \forall \ f\in  \cal L^1 \cap  \cal L^{\ii}.
$$
If $T\in AC$ is positive, we will write $T\in AC^+$.

\begin{df}
A  complete measure space $(\Om, \cal A,\mu)$ is called {\it semifinite} if every subset of $\Om$ of non-zero  measure admits a subset of
finite non-zero  measure. A semifinite measure space $(\Om, \cal A, \mu)$ is said to have the {\it direct sum property} if the Boolean
algebra $(\cal A /\sim)$ of equivalence classes of measurable sets is complete, that is, every subset of $(\cal A /\sim)$ has a least upper bound.
\end{df}

Note that every $\sigma-$finite measure space has the direct sum property. A detailed account on measures with
direct sum property is found in \cite{di}; see also \cite{ku}.

Absolute contractions (or $\cal L^1-\cal L^{\ii}-$contractions)  were considered in \cite{ga} and also in \cite{kr}.
It is clear that  if $T \in DS$, then $T| \cal L^1\in AC$.
It turns out that if  $\cal L^{\ii}, \cal L^1\su \cal L^0_h$
and $(\Om,\mu)$ has  the direct sum property, then $T\in AC$ can be uniquely extended
to a  $\sigma(\cal L^{\ii}, \cal L^1)-$continuous $DS$ operator:

\begin{teo}\label{t31}
Let $(\Om,\mu)$ have the direct
sum property  and let $\cal L^{\ii}, \cal L^1\su \cal L_h^0$. Then for any $T\in AC$ there exists a unique $S \in DS$ such that $S| \cal L^1 = T$ \ and $S| \cal L^{\ii}$ is $\sigma(\cal L^{\ii}, \cal L^1)-$continuous.
\end{teo}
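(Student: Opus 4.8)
The plan is to build $S$ on $\cal L^\ii$ by duality and then verify it patches together with $T$ on $\cal L^1$ to give a $DS$ operator. First I would fix the pairing $\langle f,g\rangle=\int_\Om fg\,d\mu$ for $f\in\cal L^\ii$, $g\in\cal L^1$, under which $\cal L^1$ is (by the direct sum property, which gives a localizable measure and hence $(\cal L^1)^*=\cal L^\ii$) the Banach-space predual of $\cal L^\ii$. Since $T\in AC$ maps $\cal L^1$ to $\cal L^1$ with $\|T\|_{1\to1}\le1$, its Banach-space adjoint $T^*:\cal L^\ii\to\cal L^\ii$ is well defined, $\sigma(\cal L^\ii,\cal L^1)$-continuous, and satisfies $\|T^*\|_{\ii\to\ii}\le1$. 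Here, though, I want a $DS$ operator extending $T$ on $\cal L^1$, not its adjoint; so the cleaner route is to first observe that $T$ also ``acts'' on $\cal L^\ii$ through density: on $\cal L^1\cap\cal L^\ii$ the operator $T$ is an $\|\cdot\|_\ii$-contraction into $\cal L^\ii$, and $\cal L^1\cap\cal L^\ii$ is $\sigma(\cal L^\ii,\cal L^1)$-dense in $\cal L^\ii$ (since each $h\in\cal L^\ii_+$ is the pointwise-increasing, hence weak-$*$, limit of $h\chi_{A_n}$ for $A_n\uparrow\Om$ of finite measure, using semifiniteness). I would define $S$ on $\cal L^\ii$ as the $\sigma(\cal L^\ii,\cal L^1)$-continuous extension of $T|_{\cal L^1\cap\cal L^\ii}$, and keep $S=T$ on $\cal L^1$; the two definitions agree on $\cal L^1\cap\cal L^\ii$, and any $f\in\cal L^1+\cal L^\ii$ is split as $g+h$ consistently because $T$ is additive, so $S$ is well defined and linear on $\cal L^1+\cal L^\ii$.

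The key steps, in order, are: (1) show $\cal L^1\cap\cal L^\ii$ is $\sigma(\cal L^\ii,\cal L^1)$-dense in $\cal L^\ii$ and that bounded nets have weak-$*$-convergent subnets (Banach--Alaoglu on $\cal L^\ii=(\cal L^1)^*$), so that the extension exists: given $h\in\cal L^\ii$ with $\|h\|_\ii\le1$, pick any net $h_\al\in\cal L^1\cap\cal L^\ii$, $\|h_\al\|_\ii\le1$, $h_\al\to h$ weak-$*$; then $\{T h_\al\}$ lies in the weak-$*$-compact ball $\{\|\cdot\|_\ii\le1\}$, so has a cluster point, which one defines to be $Sh$. (2) Show $Sh$ does not depend on the approximating net: if $h_\al\to0$ weak-$*$ then for each $g\in\cal L^1$, $\langle Th_\al,g\rangle=\langle h_\al,T^*g\rangle\to0$ since $T^*g\in\cal L^1$ — this is exactly where the adjoint $T^*:\cal L^1\to\cal L^1$ (equivalently, the transpose of $T$ restricted suitably) enters, and it is why we need $\cal L^1$ to be a genuine predual, i.e. why the direct sum property is hypothesized. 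Thus $Sh=\mathrm{w}^*\!\text{-}\!\lim Th_\al$ unambiguously, $S$ is linear and $\sigma(\cal L^\ii,\cal L^1)$-continuous on $\cal L^\ii$, and $\|Sh\|_\ii\le\liminf\|Th_\al\|_\ii\le\|h\|_\ii$ by weak-$*$ lower semicontinuity of the norm. (3) Verify the $\cal L^1$-bound of $S$ on $\cal L^1+\cal L^\ii$: by construction $S|_{\cal L^1}=T$ already contracts $\|\cdot\|_1$; combined with step (2), $S\in DS$. (4) Uniqueness: if $S'\in DS$ also extends $T$ on $\cal L^1$ and is $\sigma(\cal L^\ii,\cal L^1)$-continuous on $\cal L^\ii$, then $S'=T=S$ on the weak-$*$-dense set $\cal L^1\cap\cal L^\ii$, hence $S'=S$ on $\cal L^\ii$, hence on $\cal L^1+\cal L^\ii$.

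The main obstacle is step (2), the well-definedness and $\sigma(\cal L^\ii,\cal L^1)$-continuity of the extension: this is where the \emph{direct sum property} is essential, since it is precisely the condition under which the measure is localizable and $(\cal L^1)^*=\cal L^\ii$ with the natural pairing, so that ``$T^*$ maps $\cal L^1$ into $\cal L^1$'' makes sense and the weak-$*$ limits behave. Without localizability the ball of $\cal L^\ii$ need not be weak-$*$ compact for the pairing with $\cal L^1$, and the extension can fail to be unique or even to exist. A secondary point to handle carefully is that $S$ genuinely maps $\cal L^1+\cal L^\ii$ into itself (not just the two pieces separately): this follows because for $f=g+h$ with $g\in\cal L^1,h\in\cal L^\ii$ we set $Sf=Tg+Sh\in\cal L^1+\cal L^\ii$, and independence of the decomposition is the identity $T(g_1-g_2)=S(h_2-h_1)$ on $\cal L^1\cap\cal L^\ii$, valid since both sides equal $T$ there. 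Finally, positivity (the $AC^+\Rightarrow DS^+$ remark) would come for free: weak-$*$ limits of nonnegative functions are nonnegative, so $S$ is positive whenever $T$ is.
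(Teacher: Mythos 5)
Your overall architecture parallels the paper's (extend $T$ from $\cal L^1\cap\cal L^\ii$ to $\cal L^\ii$ by $\sigma(\cal L^\ii,\cal L^1)$-density, and get uniqueness from that same density), but step (2), which you yourself single out as the main obstacle, is asserted rather than proved, and this is a genuine gap. You write $\langle Th_\al,g\rangle=\langle h_\al,T^*g\rangle$ with ``the adjoint $T^*:\cal L^1\to\cal L^1$.'' No such operator is available: the Banach adjoint of $T:\cal L^1\to\cal L^1$ maps $(\cal L^1)^*=\cal L^\ii$ into $\cal L^\ii$, and the claim that it carries $\cal L^1\cap\cal L^\ii$ into $\cal L^1$ with an $\|\cdot\|_1$-bound --- equivalently, that for each fixed $g\in\cal L^1$ the functional $h\mapsto\int (Th)g$ on $\cal L^1\cap\cal L^\ii$ is $\sigma(\cal L^\ii,\cal L^1)$-continuous --- is precisely the nontrivial content of the theorem. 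The direct sum property gives $(\cal L^1)^*=\cal L^\ii$ and Alaoglu compactness, but it does not manufacture a pre-adjoint: a functional that is merely norm-bounded on a weak-$*$ dense subspace need not extend weak-$*$ continuously, so without this step different approximating nets can produce different cluster points and your $Sh$ is not well defined.

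The paper closes exactly this gap in two stages, and that is where all the work lies. For $T\in AC^+$ (Theorem \ref{t32}) one shows $\|T^*f\|_1\le\|f\|_1$ for $f\in\cal L^\ii_+\cap\cal L^1$ via the computation $\int T^*(f)\chi_{F_\al}=\int f\,T(\chi_{F_\al})\le\int f$ over the net of finite-measure sets, combined with Lemma \ref{l31}; this makes $T^*$, and then $T^{**}$, extendable to $DS^+$ operators. For general $T$ the paper reduces to the positive case through the linear modulus: $|T|\in AC^+$ and $|T^*|=|T|^*$ (Theorem \ref{t33} and Proposition \ref{p32}), whence $\|T^*f\|_1\le\|\,|T|^*(|f|)\,\|_1\le\|f\|_1$ on $\cal L^\ii\cap\cal L^1$. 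Note that this reduction is the only place the hypothesis $\cal L^1,\cal L^\ii\su\cal L^0_h$ is used; your proposal never invokes it, which is a symptom of the missing step. Your steps (1), (3) and (4), and the remark that positivity is preserved under weak-$*$ limits, are fine; to complete the argument you would need to supply the positive case via Lemma \ref{l31} and then the modulus argument, at which point you have reproduced the paper's proof.
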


\begin{rem}
In what follows, we will only need Theorem \ref{t31} in the case of $\sigma-$finite measure.
However, since this theorem, and Lemma \ref{l31}
below, are interesting results by themselves, we prove them in more general settings, namely, for a measure with the direct
sum property and for a semifinite measure, respectively.
\end{rem}

In order to prove Theorem \ref{t31}, we will need Lemma \ref{l31} below.  Let us denote
$$
\cal F=\{F\su \Om: \ 0<\mu(F)<\ii\},
$$
and let $\{ F_{\al}\}$ be the directed set consisting of the elements of $\cal F$, ordered by inclusion.

\begin{lm}\label{l31}  If $(\Om,\mu)$ is semifinite, then
$\int \chi_{F_{\al}}g\to \int g$ for all $g\in \cal L^1$.
\end{lm}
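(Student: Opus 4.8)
The plan is to verify the convergence of the net directly from the definition: I will show that for every $\ep>0$ there is a set $F_0\in\cal F$ with the property that $\left|\int g\,d\mu-\int\chi_{F}g\,d\mu\right|<\ep$ for every $F\in\cal F$ satisfying $F\supseteq F_0$, which is precisely what it means for the net $\{\int\chi_{F_\al}g\}$, indexed by $(\cal F,\subseteq)$, to converge to $\int g$. If $g=0$ a.e.\ the assertion is trivial (both sides vanish), so we may assume $\|g\|_1>0$.

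First I would use the standard fact that the support of an $\cal L^1$ function is $\sigma$-finite. Put $A_n=\{|f|>1/n\}$, oops, $A_n=\{|g|>1/n\}$; by Chebyshev's inequality $\mu(A_n)\le n\|g\|_1<\ii$, and $A_n\uparrow\{g\ne 0\}$. Since $|g|\chi_{A_n}\uparrow|g|$ a.e., monotone convergence gives $\int_{A_n}|g|\,d\mu\to\|g\|_1$, so we may fix $N$ with $\int_{\Om\sm A_N}|g|\,d\mu<\ep$; as this quantity is nonincreasing in $N$, we may further enlarge $N$ so that $\mu(A_N)>0$ (possible because $\mu\{g\ne 0\}>0$). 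Then $0<\mu(A_N)<\ii$, i.e.\ $F_0:=A_N\in\cal F$. Now for any $F\in\cal F$ with $F\supseteq F_0$ we have $\Om\sm F\su\Om\sm F_0$, and, all integrals being finite since $F$ has finite measure and $g\in\cal L^1$,
$$
\left|\int g\,d\mu-\int\chi_F g\,d\mu\right|=\left|\int_{\Om\sm F}g\,d\mu\right|\le\int_{\Om\sm F}|g|\,d\mu\le\int_{\Om\sm F_0}|g|\,d\mu<\ep,
$$
the computation being identical for complex-valued $g$. As $\ep>0$ was arbitrary, $\int\chi_{F_\al}g\to\int g$.

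There is no serious obstacle here; the only point requiring a moment's care is the production of a single member of $\cal F$ that captures all but an $\ep$-fraction of the mass of $g$. This rests on the $\sigma$-finiteness of $\{g\ne 0\}$, which for $g\in\cal L^1$ is automatic via Chebyshev, and it is exactly at this step that semifiniteness of $(\Om,\mu)$ would be the relevant hypothesis were the support not a priori $\sigma$-finite. Everything else is Chebyshev's inequality, monotone convergence, and the triangle inequality.
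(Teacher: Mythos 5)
Your proof is correct, and it takes a genuinely different route from the paper's. The paper reduces to $g\ge 0$, observes that $\left\{\int\chi_{F_{\al}}g\right\}$ is then a bounded monotone net converging to its supremum $s$, extracts an increasing sequence $F_n$ realizing $s$, sets $E=\cup F_n$, and shows by contradiction (invoking semifiniteness to produce a set of finite positive measure inside $\Om\sm E$ on which $g$ is nonzero) that $g$ vanishes off $E$, so that $s=\int_E g=\int g$. You instead verify the $\ep$--definition of net convergence directly, using Chebyshev and monotone convergence to produce a single tail index $F_0=\{|g|>1/N\}\in\cal F$ carrying all but $\ep$ of the mass of $|g|$; the only care needed, which you handle, is ensuring $\mu(F_0)>0$ so that $F_0$ actually belongs to $\cal F$. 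Your version is shorter, handles complex $g$ without the positivity reduction, and makes explicit something the paper's proof obscures: for $g\in\cal L^1$ the $\sigma$-finiteness of $\{g\ne 0\}$ is automatic, so semifiniteness of $(\Om,\mu)$ is not really used in the estimate (it matters only to guarantee that $\cal F$ is a nonempty directed set in degenerate situations, and it is the natural standing hypothesis for the way the lemma is applied later). The paper's supremum-and-contradiction argument buys nothing extra here beyond fitting the monotone-net idiom; both proofs rest on the same underlying fact that an integrable function is concentrated, up to $\ep$, on a set of finite measure.
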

\begin{proof}
It is enough to prove the statement for $g\ge 0$. Then $\int \chi_{F_{\al}}g\leq \int g$ for every $\al$ and, since
$\left \{ \int \chi_{F_{\al}}g \right \}$ is an increasing net  of real numbers, we have
$$
\lim_{\al} \int \chi_{F_{\al}}g=\sup_{\al}\int \chi_{F_{\al}}g=s<\ii.
$$
There exists a sequence $\{ f_n\}\su \cal F$ such that
$$
\lim_{n\to \ii} \int \chi_{F_n}g=s.
$$
Without loss of generality, we can assume that $F_n\su F_{n+1}$ for all $n \geq 1$.
Let $E=\cup F_n$ and assume that $g\chi_{\Om \sm E}\neq 0$. Then there is $F\in \cal F$
such that $F\su \Om \sm E$ and $g\chi_F>0$, and we obtain
$$
\lim_{n\to \ii}\int \chi_{F_n\cup F}g=\int \chi_{E\cup F}g>\int \chi_Eg=\lim_{n\to \ii}\int \chi_{F_n}g=s.
$$
This entails that $\sup_{\al}\int \chi_{F_{\al}}g>s$, a contradiction, so
$g\chi_{\Om \sm E}=0$. Consequently,
$$
s=\lim_{n\to \ii} \int \chi_{F_n}g=\int \chi_Eg=\int g,
$$
hence $\int \chi_{F_{\al}}g\to \int g$.
\end{proof}

Let us first establish Theorem \ref{t31} for positive operators.

\begin{teo}\label{t32}
If $(\Om,\mu)$, $\cal L^{\ii}$, and $\cal L^1$ are as in Theorem \ref{t31}, then, given $T\in AC^+$, there exists a unique $S \in DS^+$ such that $S| \cal L^1 = T$ and $S|\cal L^{\ii}$ is $\sigma(\cal L^{\ii}, \cal L^1)-$continuous.
\end{teo}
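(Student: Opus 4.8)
\medskip

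\noindent\textbf{Proof proposal.} The plan is to build $S$ by defining it separately on $\cal L^1$ and on $\cal L^\ii$ and then gluing the two pieces. On $\cal L^1$ there is no freedom: $S|\cal L^1$ must be $T$. On $\cal L^\ii$, requiring $S|\cal L^\ii$ to be $\sigma(\cal L^\ii,\cal L^1)$-continuous forces it to be the Banach-space adjoint of some bounded operator of $\cal L^1$, so the first task is to produce that operator. Since $(\Om,\mu)$ has the direct sum property, $(\cal L^1)^*=\cal L^\ii$, hence the adjoint $T^*\colon\cal L^\ii\to\cal L^\ii$ of $T\colon\cal L^1\to\cal L^1$ is well defined and is a positive contraction of $\cal L^\ii$. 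I would first check that $T^*$ also contracts the $\cal L^1$-norm on $\cal L^1\cap\cal L^\ii$: for $0\le\psi\in\cal L^1\cap\cal L^\ii$ and $F\in\cal F$,
$$\int_F T^*\psi\,d\mu=\int\psi\,(T\chi_F)\,d\mu\le\|\psi\|_1,$$
because $0\le T\chi_F\le\|\chi_F\|_\ii\,\mathbf 1=\mathbf 1$ by $T\in AC^+$; taking the supremum over $F\in\cal F$ and using that $\mu$ is semifinite (so that $\int w\,d\mu=\sup_{F\in\cal F}\int_F w\,d\mu$ for every measurable $w\ge0$) yields $\|T^*\psi\|_1\le\|\psi\|_1$, and the general case follows from $|T^*\psi|\le T^*|\psi|$. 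Thus $T^*|(\cal L^1\cap\cal L^\ii)$ extends uniquely, by density of $\cal L^1\cap\cal L^\ii$ in $\cal L^1$, to a positive linear contraction $\wt T\colon\cal L^1\to\cal L^1$.

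Next I would put $S|\cal L^1=T$ and $S|\cal L^\ii=(\wt T)^*$, the adjoint of $\wt T$; this maps $\cal L^\ii$ into $\cal L^\ii$, is a positive $\cal L^\ii$-contraction, and is automatically $\sigma(\cal L^\ii,\cal L^1)$-continuous. The crux — the step I expect to be the main obstacle — is to verify that these two prescriptions agree on $\cal L^1\cap\cal L^\ii$, for only then is $S$ unambiguously defined on $\cal L^1+\cal L^\ii$ via $S(h_1+h_\ii)=Th_1+(\wt T)^*h_\ii$. To see it, fix $\psi\in\cal L^1\cap\cal L^\ii$; for every $x\in\cal L^1\cap\cal L^\ii$ one has $\wt Tx=T^*x$, so
$$\int\big((\wt T)^*\psi\big)\,x\,d\mu=\int\psi\,(\wt Tx)\,d\mu=\int\psi\,(T^*x)\,d\mu=\int(T\psi)\,x\,d\mu,$$
the last equality being the defining relation of $T^*$ (note $\psi\in\cal L^1$). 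As $(\wt T)^*\psi$ and $T\psi$ both lie in $\cal L^\ii$ and $\cal L^1\cap\cal L^\ii$ separates $\cal L^\ii$ (test against $\mathrm{sgn}(\,\cdot\,)\,\chi_F$, $F\in\cal F$), this forces $(\wt T)^*\psi=T\psi$. Besides this identity, the delicate inputs are the two measure-theoretic facts just used: the identification $(\cal L^1)^*=\cal L^\ii$, which is exactly where the direct sum property enters, and the exhaustion identity for $\int w\,d\mu$, which uses semifiniteness.

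Once consistency is in hand, the rest is routine. The operator $S$ is linear, with $\|Sh\|_1\le\|h\|_1$ on $\cal L^1$ and $\|Sh\|_\ii\le\|h\|_\ii$ on $\cal L^\ii$ (an adjoint preserves the operator norm), so $S\in DS$; and $S$ is positive, because any $0\le h\in\cal L^1+\cal L^\ii$, written $h=g+k$ with $g\in\cal L^1$ and $k\in\cal L^\ii$, splits as $h=h\chi_{\{h>\|k\|_\ii+1\}}+h\chi_{\{h\le\|k\|_\ii+1\}}$, whose first summand lies in $\cal L^1_+$ (being dominated by $(g^++\|k\|_\ii)\chi_{\{g>1\}}\in\cal L^1$) and whose second lies in $\cal L^\ii_+$, so $Sh\ge0$. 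Hence $S\in DS^+$, $S|\cal L^1=T$, and $S|\cal L^\ii=(\wt T)^*$ is $\sigma(\cal L^\ii,\cal L^1)$-continuous. Finally, for uniqueness, another such $S'$ agrees with $S$ on $\cal L^1$, hence on $\cal L^1\cap\cal L^\ii$; since $f\chi_{F_\al}\to f$ in $\sigma(\cal L^\ii,\cal L^1)$ for every $f\in\cal L^\ii$ (apply Lemma \ref{l31} to $fg\in\cal L^1$, $g\in\cal L^1$), $\cal L^1\cap\cal L^\ii$ is $\sigma(\cal L^\ii,\cal L^1)$-dense in $\cal L^\ii$, so $S|\cal L^\ii$ and $S'|\cal L^\ii$, both $\sigma(\cal L^\ii,\cal L^1)$-continuous, coincide on $\cal L^\ii$; by linearity $S=S'$.
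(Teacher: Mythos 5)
Your proposal is correct and follows essentially the same route as the paper's proof: form the adjoint $T^*$ on $\cal L^{\ii}$ via $(\cal L^1)^*=\cal L^{\ii}$, show it is a $\|\cdot\|_1$-contraction on $\cal L^1\cap\cal L^{\ii}$, extend it to $\cal L^1$, take the second adjoint, verify that it agrees with $T$ on $\cal L^1\cap\cal L^{\ii}$ (hence on $\cal L^1$), and obtain uniqueness from the $\sigma(\cal L^{\ii},\cal L^1)$-density of $\cal L^1\cap\cal L^{\ii}$ supplied by Lemma \ref{l31}. The only minor difference is that you prove $\|T^*\psi\|_1\le\|\psi\|_1$ by exhausting $\Om$ with sets of finite measure directly (using semifiniteness), whereas the paper reaches the same bound through a duality computation combined with Lemma \ref{l31}; you are also somewhat more explicit about the well-definedness of the glued operator and its positivity on all of $\cal L^1+\cal L^{\ii}$.
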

\begin{proof}
 Since $(\Om,\mu)$ has the direct sum property, Radon-Nikodym theorem is valid \cite[$\S$10, Sec.8]{di},
hence $ (\cal L^1)^* = \cal L^{\ii}$, and the adjoint operator $T^*: \cal L^{\ii} \to \cal L^{\ii}$ is defined.
Besides, $\|T^*\|_{\cal L^{\ii}\to \cal L^{\ii}} = \|T\|_{\cal L^1 \to \cal L^1} \leq 1$ and $T^*$ is also
$\sigma(\cal L^{\ii}, \cal L^1)-$continuous. Moreover,

$$
\int T^*(f)g=\int fT(g)
$$
for all $f\in \cal L^{\ii}$, $g\in \cal L^1$. In particular, it follows that the linear operator $T^*$ is positive.

Now, if $f\in \cal L^{\ii}_+\cap \cal L^1$, then, with $\chi_{F_{\al}}$ as in Lemma \ref{l31}, we have
$$
0 \leq \int T^*(f)\chi_{F_{\al}} = \int f T(\chi_{F_{\al}}) \leq \int f,
$$
for all $\alpha$, hence $T^*(f) \in \cal L^{\ii}_+\cap \cal L^1$.
In addition,
$$
\begin{aligned}
\|T^*(f) \|_1
& = \sup \left \{\left | \int T^*(f)g \right |:\  g \in \cal L^{\ii} =  (\cal L^1)^*,\  \|g\|_{\ii}\leq 1 \right \} \\
&\leq \sup \left \{ \int T^*(f)|g|:\  g \in \cal L^{\ii},\  \|g\|_{\ii}\leq 1 \right \} \\
&=\sup \left \{ \int T^*(f) g:\  g \in \cal L^{\ii}_+,\  \|g\|_{\ii}\leq 1 \right \}.
\end{aligned}
$$
Since, by Lemma \ref{l31}, for every $g \in \cal L^{\ii}_+$ with $\|g\|_{\ii}\leq 1$ we have
$$
\begin{aligned}
\int T^*(f) g
&= \lim_{\al} \int T^*(f) \chi_{F_{\al}}g \\
& \leq \sup \left \{ \int T^*(f)h=\int f T(h):\  h \in \cal L^{\ii}_+\cap \cal L^1,\  \| h\|_{\ii} \leq 1 \right \} \leq \int f,
\end{aligned}
$$
it follows that $\| T^*(f)\|_1\leq \| f\|_1$ whenever $f\in \cal L^{\ii}_+\cap \cal L^1$.  Therefore $T^*$
is a positive linear $\| \cdot \|_1-$contraction on  $\cal L^{\ii}\cap \cal L^1$  and,
since $\cal L^{\ii}\cap \cal L^1$ is dense in $\cal L^1$, it uniquely extends to a  $DS^+$ operator, which we also denote by $T^*$.

For the $\sigma(\cal L^{\ii}, \cal L^1)-$continuous adjoint operator $T^{**}:  (\cal L^{\ii})^{\ast} \to (\cal L^{\ii})^{\ast}$  and  all \\ $f, \ g\in \cal L^{\ii}\cap \cal L^1$, we have
$$
\int T^{**}(f)g=\int fT^*(g) = \int T(f) g,
$$
hence $ T^{**}(f) = T(f)$ whenever $f \in \cal L^{\ii}\cap \cal L^1$. In the same way as $T^*$,
$T^{**}$  uniquely extends to a $DS^+$ operator (which we also denote by $T^{**}$)
such that  $T^{**}(g) = T(g)$ for every $g \in \cal L^1$ and $T^{**}|\cal L^{\ii}$ is $\sigma(\cal L^{\ii}, \cal L^1)-$continuous.

Let $S\in DS^+$ be another operator such that $S(g) = T(g)$ for every $g \in \cal L^1$ and  $S|\cal L^{\ii}$ is $\sigma(\cal L^{\ii}, \cal L^1)-$continuous. Given $f \in \cal L^{\ii}$ and $g \in \cal L^1$, it follows from Lemma \ref{l31} that $ \int g \chi_{F_{\al}}f \to \int gf$, that is, $\chi_{F_{\al}}f \rightarrow f$ in $\sigma(\cal L^{\ii}, \cal L^1)-$topology, so $\cal L^{\ii}\cap \cal L^1$ is $\sigma(\cal L^{\ii}, \cal L^1)-$dense in $\cal L^{\ii}$. Therefore $S|\cal L^{\ii} = T^{**}|\cal L^{\ii}$,
which completes the proof.
\end{proof}

We shall recall now the following statement on the existence and properties of linear modulus of a bounded linear operator $T:\cal L^1\to \cal L^1$ ($T:\cal L^{\ii} \to \cal L^{\ii}$) (see \cite[Ch.4, \S 4.1, Theorem 1.1]{kr}):

\begin{teo}\label{t33}
Let $\cal L^{\ii}, \cal L^1\su \cal L_h^0$. Then for any bounded linear operator \\ $T:\cal L^1\to \cal L^1$ ($T:\cal L^{\ii} \to \cal L^{\ii}$) there exists a unique positive
bounded linear operator $|T|:\cal L^1\to \cal L^1$  (respectively, $|T|:\cal L^{\ii} \to \cal L^{\ii}$) such that
\begin{enumerate} [(i)]
\item $ \| \ |T| \ \| = \| T\|$;

\item $|T(f)|\leq |T|(|f|)$ for all $f\in \cal L^1$ (respectively, for all $f\in \cal L^{\ii}$);

\item $|T|(f )=\sup\{ |T(g)|: g\in \cal L^1, |g|\leq f\}$ for all $f\in \cal L^1_+$
\\ (respectively, $|T|(f)=\sup\{ |T(g)|: g \in \cal L^{\ii}, |g|\leq f\}$ for all $f \in \cal L^{\ii}_+$).

\end{enumerate}

\end{teo}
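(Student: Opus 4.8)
This is the classical Riesz--Kantorovich construction of the linear modulus of a regular operator, and the plan is to build $|T|$ first on the positive cone and then extend it by linearity. I describe the case $T\colon\mathcal L^1\to\mathcal L^1$; the case $T\colon\mathcal L^\ii\to\mathcal L^\ii$ is handled in the same way, replacing the $\|\cdot\|_1$-estimates below by their $\|\cdot\|_\ii$-counterparts.

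\emph{Construction on the cone.} For $f\in\mathcal L^1_+$ I would set
\[
|T|(f)=\sup\Big\{\sum_{i=1}^n|T(f_i)|:\ n\ge1,\ f_i\in\mathcal L^1_+,\ \sum_{i=1}^nf_i=f\Big\}.
\]
Two things must be checked for this to make sense. First, the family of finite sums on the right is upward directed: given decompositions $f=\sum_if_i=\sum_jg_j$, the Riesz decomposition property of $\mathcal L^1$ produces $h_{ij}\in\mathcal L^1_+$ with $f_i=\sum_jh_{ij}$ and $g_j=\sum_ih_{ij}$, and then $\sum_{i,j}|T(h_{ij})|$ dominates both $\sum_i|T(f_i)|$ and $\sum_j|T(g_j)|$ by the triangle inequality for $T$. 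Second, the family is norm bounded: $\big\|\sum_i|T(f_i)|\big\|_1=\sum_i\|T(f_i)\|_1\le\|T\|\sum_i\|f_i\|_1=\|T\|\,\|f\|_1$. Since $\mathcal L^1$ has order continuous norm, hence is Dedekind complete and norm-bounded increasing nets in $\mathcal L^1_+$ have least upper bounds, the supremum $|T|(f)\in\mathcal L^1_+$ exists and satisfies $\||T|(f)\|_1\le\|T\|\,\|f\|_1$.

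\emph{Linearity and the three properties.} Concatenating decompositions gives $|T|(f+g)\ge|T|(f)+|T|(g)$, while splitting a decomposition of $f+g$ through the Riesz decomposition property gives the reverse inequality; together with the obvious $|T|(\lb f)=\lb\,|T|(f)$ for $\lb\ge0$, the Kantorovich extension procedure turns $|T|$ into a positive linear operator on $\mathcal L^1$ via $|T|(f)=|T|(f^+)-|T|(f^-)$. Property (ii) is then immediate from the decomposition $|f|=f^++f^-$ and $|T(f)|\le|T(f^+)|+|T(f^-)|$. For (iii), fix $f\in\mathcal L^1_+$ and put $q(f)=\sup\{|T(g)|:|g|\le f\}$ (this supremum exists because the family is order bounded by $|T|(f)$). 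Taking $f_1=g^+$, $f_2=g^-$, $f_3=f-|g|$ shows $q(f)\le|T|(f)$; for the reverse, note that for any decomposition $f=\sum_{i=1}^nf_i$ and any sign vector $\ep\in\{-1,1\}^n$ one has $\big|\sum_i\ep_if_i\big|\le f$, so $\big|T(\sum_i\ep_if_i)\big|\le q(f)$, and since pointwise the choice $\ep_i=\operatorname{sgn}(T(f_i))$ yields $\sum_i|T(f_i)|=\max_{\ep}\big|T(\sum_i\ep_if_i)\big|$, we obtain $\sum_i|T(f_i)|\le q(f)$, hence $|T|(f)\le q(f)$. Finally, (i) follows by combining $\big||T|(f)\big|\le|T|(|f|)$ (from positivity and additivity) with the norm bound of the first step to get $\||T|\|\le\|T\|$, and with (ii) to get $\|T(f)\|_1\le\||T|(|f|)\|_1\le\||T|\|\,\|f\|_1$, i.e.\ $\|T\|\le\||T|\|$. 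Uniqueness is then free, since (iii) already determines $|T|$ on $\mathcal L^1_+$, hence on $\mathcal L^1$ by linearity.

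\emph{Main obstacle.} The one genuinely nonformal point is the existence of the defining supremum \emph{inside} the space in the first step: it is here that upward-directedness (via the Riesz decomposition property), order completeness, and the uniform norm bound must be used in tandem. In the $\mathcal L^\ii$ setting this is more delicate --- the estimate via $\sum_i\|T(f_i)\|_\ii$ is worthless, and one must instead bound $\big\|\sum_i|T(f_i)|\big\|_\ii=\big\|\max_\ep|T(\sum_i\ep_if_i)|\big\|_\ii\le\|T\|\,\|f\|_\ii$ using the same sign trick that reappears in the proof of (iii), and one additionally needs $\mathcal L^\ii$ itself to be order complete.
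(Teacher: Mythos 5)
The paper does not actually prove this statement: Theorem~3.3 is recalled from Krengel \cite[Ch.4, \S 4.1, Theorem 1.1]{kr}, so there is no in-paper proof to compare against. Your argument is the standard Chac\'on--Krengel/Riesz--Kantorovich construction of the linear modulus and is, in substance, the proof in the cited source; the only cosmetic difference is that Krengel takes $\sup\{|T(g)|:|g|\le f\}$ as the definition and proves additivity directly, whereas you define $|T|(f)$ via partitions of $f$ and recover that description as property (iii) --- your sign-vector identity shows the two are interchangeable. Two points deserve tightening. First, order continuity of the norm does not by itself yield suprema of norm-bounded increasing nets ($c_0$ is the standard counterexample); what you actually need is that $\mathcal L^1$ has the Levi/Fatou property (is a KB-space), which it does. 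Second, in the $\mathcal L^{\infty}$ case the Dedekind completeness you flag at the end is genuinely an extra hypothesis: for an arbitrary measure space $\mathcal L^{\infty}$ need not be order complete, and one needs the measure to be semifinite with the direct sum property (localizable). The theorem as stated in the paper omits this, but every place the paper actually uses the $\mathcal L^{\infty}$ modulus it is working under the direct sum property, so your construction closes in exactly the setting where it is needed.
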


The operator $|T|$ is called the {\it linear modulus} of $T$.
In addition, $|T|$ satisfies the following properties \cite[Ch.4, \S 4.1, Theorem 1.3, Proposition 1.2 (d),(e)]{kr}).

\begin{pro}\label{p32}
If $T \in AC$, then
\begin{enumerate} [(i)]
\item  $|T^k(f)|\leq |T|^k(|f|)$ for every $f\in \cal L^1, \ k=1,2, \dots$;
\item $\|T\|_{1,\infty}:= \sup\{\|Tf\|_{\infty}: f \in \cal L^1 \cap \cal L^{\ii}, \ \|f\|_{\infty} \leq 1 \} = \| \ |T| \ \|_{1,\infty}$;
\item $ |T^*| = |T|^*$ (in the case direct sum property of $(\Om,\cal A, \mu)$).
\end{enumerate}
\end{pro}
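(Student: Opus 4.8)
The plan is to deduce all three clauses from Theorem \ref{t33}, using repeatedly that $|T|$ is positive (hence monotone) and that, by Theorem \ref{t33}(iii), $|T|$ is the \emph{smallest} positive operator $S$ with $|T(g)|\le S(|g|)$ for every $g$: indeed, if $S\ge0$ and $|T(g)|\le S(|g|)$, then for $f\ge0$ and $|g|\le f$ we have $|T(g)|\le S(|g|)\le S(f)$, so $|T|(f)=\sup\{|T(g)|:|g|\le f\}\le S(f)$. Clause (i) is then an induction on $k$: the case $k=1$ is Theorem \ref{t33}(ii), and applying $|T|$ to the inequality $|T^{k-1}(f)|\le|T|^{k-1}(|f|)$ and using monotonicity together with Theorem \ref{t33}(ii) once more gives $|T^{k}(f)|=|T(T^{k-1}(f))|\le|T|(|T^{k-1}(f)|)\le|T|(|T|^{k-1}(|f|))=|T|^{k}(|f|)$.

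For clause (ii), take $f\in\cal L^1\cap\cal L^{\ii}$ with $\|f\|_{\ii}\le1$. Then $|f|$ lies in the same class with $\||f|\|_{\ii}\le1$, so by Theorem \ref{t33}(ii), $\|T(f)\|_{\ii}\le\||T|(|f|)\|_{\ii}\le\|\,|T|\,\|_{1,\ii}$, and taking the supremum over such $f$ yields $\|T\|_{1,\ii}\le\|\,|T|\,\|_{1,\ii}$. Conversely, every $h\in\cal L^1$ with $|h|\le|f|$ also belongs to $\cal L^1\cap\cal L^{\ii}$ with $\|h\|_{\ii}\le1$, hence $|T(h)|\le\|T\|_{1,\ii}\,\mathbf 1$ a.e.; since, by Theorem \ref{t33}(iii), $|T|(|f|)$ is the least upper bound of $\{|T(h)|:h\in\cal L^1,\ |h|\le|f|\}$, this common bound forces $|T|(|f|)\le\|T\|_{1,\ii}\,\mathbf 1$, so $\||T|(f)\|_{\ii}\le\||T|(|f|)\|_{\ii}\le\|T\|_{1,\ii}$ and therefore $\|\,|T|\,\|_{1,\ii}\le\|T\|_{1,\ii}$.

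For clause (iii), the direct sum property gives $(\cal L^1)^*=\cal L^{\ii}$ (Radon-Nikodym), so $T^*$ and $|T|^*$ are well-defined operators on $\cal L^{\ii}$, and $|T|^*$ is positive because $|T|$ is. First I would show that $|T|^*$ dominates $T^*$: for $g\in\cal L^{\ii}$ and $f\in\cal L^1_+$, testing $|T^*(g)|$ against $f$ by the identity $\int|u|\,f=\sup\{\,\bigl|\int u\varphi\bigr|:\varphi\in\cal L^1,\ |\varphi|\le f\,\}$, the duality $\int T(\varphi)\,v=\int\varphi\,T^*(v)$, and $|T(\varphi)|\le|T|(f)$ for $|\varphi|\le f$, one obtains $\int|T^*(g)|\,f\le\int|g|\,|T|(f)=\int|T|^*(|g|)\,f$, whence $|T^*(g)|\le|T|^*(|g|)$; the minimality above, applied now to $T^*$ on $\cal L^{\ii}$, yields $|T^*|\le|T|^*$. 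For the reverse inequality I would use the partition form of the linear modulus, $|T|(f)=\sup\{\sum_k|T(f_k)|:f_k\ge0,\ \sum_k f_k=f\}$, where the supremum runs over a family that is \emph{upward directed} (refining a partition only increases $\sum_k|T(f_k)|$); then for $g\in\cal L^{\ii}_+$ and $f\in\cal L^1_+$ one may carry the supremum inside the integral, $\int g\,|T|(f)=\sup_{\{f_k\}}\sum_k\int g\,|T(f_k)|$, and bounding each summand by $\int g\,|T(f_k)|=\sup_{|\sigma|\le1}\bigl|\int f_k\,T^*(g\sigma)\bigr|\le\int f_k\,|T^*|(g)$ (using $|g\sigma|\le g$ and Theorem \ref{t33}(ii) for $T^*$) and summing gives $\int|T|^*(g)\,f=\int g\,|T|(f)\le\int|T^*|(g)\,f$ for all $f\in\cal L^1_+$, i.e. $|T|^*\le|T^*|$. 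Hence $|T^*|=|T|^*$.

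The one delicate point is bringing a lattice supremum inside an integral, namely the identity $\int g\,|T|(f)=\sup_{\{f_k\}}\sum_k\int g\,|T(f_k)|$ in the second half of (iii); this is exactly why I would work with the directed supremum over finite decompositions $f=\sum_k f_k$ rather than with the family $\{|T(h)|:|h|\le f\}$, whose directedness is not transparent. Once this is in place, everything else is a routine combination of monotonicity and the three clauses of Theorem \ref{t33}.
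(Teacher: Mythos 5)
Your argument is correct, but note that the paper does not actually prove this proposition: it is imported from Krengel \cite[Ch.4, \S 4.1, Theorem 1.3, Proposition 1.2]{kr} with no in-paper argument, so there is nothing internal to compare against. What you have written is essentially the standard derivation of those properties from Theorem \ref{t33}, and it holds up: the minimality characterization of $|T|$, the induction in (i), and both inequalities in (ii) are sound (in (ii), the upper bound $\|T\|_{1,\infty}\mathbf 1$ need not belong to $\cal L^1$ when $\mu(\Om)=\ii$, but it still dominates the $\cal L^1$-supremum because $\cal L^1$ is an ideal in $\cal L^0$ — worth a half-sentence). Two points in (iii) deserve to be made explicit rather than asserted. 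First, the substitution of the partition formula $|T|(f)=\sup\{\sum_k|T(f_k)|: f_k\ge 0,\ \sum_k f_k=f\}$ for the formula of Theorem \ref{t33}(iii) needs a line of justification: each $\sum_k|T(f_k)|\le\sum_k|T|(f_k)=|T|(f)$ by linearity of $|T|$, while the decomposition $f=g^++g^-+(f-|g|)$ shows the partition supremum dominates every $|T(g)|$ with $|g|\le f$; and the upward directedness of this family comes from the Riesz decomposition property of $\cal L^1$ (a common refinement $\{u_{kj}\}$ of two partitions with $\sum_j u_{kj}=f_k$, $\sum_k u_{kj}=h_j$). Second, carrying the directed supremum through $\int g(\cdot)\,d\mu$ is exactly the delicate point you flag; it is justified because $\cal L^1$ is an AL-space and hence has order continuous norm, so an upward directed family that is order bounded (here by $|T|(f)\in\cal L^1$) converges in norm to its lattice supremum. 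With those two standard facts named, the proof is complete.
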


Here is a proof of Theorem \ref{t31}.

\begin{proof} By virtue of Theorem \ref{t33} (i) and Proposition \ref{p32} (ii), $|T| \in AC^+$.
The adjoint operators $T^*$ and $|T|^*$ are contractions in $\cal L^{\ii}$ such that $|T^*| = |T|^*$,
by Proposition \ref{p32} (iii).
As in proof of Theorem \ref{t32}, $|T|^*$ uniquely extends to a $DS^+$ operator, which we also denote by $|T|^*$.
It follows from
$$
\|T^* f\|_1 \leq  \| |T^*| (|f|)\|_1 =  \| |T|^* (|f|)\|_1 \leq \| f\|_1,  \ f \in \cal L^{\ii} \cap \cal L^1,
$$
that $T^*$ is a $\|\cdot\|_1-$contraction in $\cal L^{\ii} \cap \cal L^1$. Therefore, as in the proof of Theorem \ref{t32},
$T^*$ and then $T^{**}$  can be uniquely extended to $DS$ operators.

Repeating the argument at the end of proof of Theorem \ref{t32}, we obtain the result.
\end{proof}
An important consequence of Theorem \ref{t31} is that  if $\cal L^{\ii}, \cal L^1\su \cal L_h^0$, then, given \\ $T\in AC$, one can assume, without loss of generality, that $T\in DS$, so that $\| T(f)\|_{\ii}\leq \| f\|_{\ii}$ for every $f\in \cal L^{\ii}$.

\begin{rem}\label{r10}
Let $E$ be a subspace of $\cal L^1+\cal L^\ii$ such that $T(E)\su E$ if $T\in DS$.
In what follows, if a.e. convergence of conventional or weighted ergodic averages holds for $T\in DS$ and every
$f\in E_h$, these averages also converge a.e. whenever $f\in E$ and $T\in DS^+$
because then $T(E_h)\su E_h$. Therefore,
we routinely assume that $T\in DS$ if $\cal L^{\ii},\cal L^1\su \cal L_h^0$ and $T\in DS^+$ in the general case.
\end{rem}

\section{Dunford-Schwartz pointwise ergodic theorem in fully symmetric spaces}

Dunford-Schwartz theorem on pointwise convergence of the ergodic averages
\begin{equation}\label{eq01}
a_n(f)=\frac 1 n \sum_{k=0}^{n-1}T^k(f)
\end{equation}
for $T\in DS$ acting in the $L^p-$space, $1\leq p<\ii$, of real valued functions of an
arbitrary measure space was established in \cite{DS}; see also \cite[Theorem VIII.6.6]{ds}.
Note that, since the set on which  the function $T^k(f)\in \cal L^p$, $1\leq p<\ii$,
$k=0,1,2,\dots$, does not equal to zero
is a countable union of sets of finite measure, one can assume that the measure is $\sigma-$finite.

In this section we prove Dunford-Schwartz pointwise
ergodic theorem in $\cal R_\mu$, Theorem \ref{t3}, for real and complex valued (when $T\in DS^+$) functions,
arguably the most general version of the classical result.

In view of Remark \ref{r10}, Dunford-Schwartz pointwise ergodic theorem in $\cal L^1$ can be stated as follows.

\begin{teo}\label{t2}
Let $(\Om, \mu)$  be an arbitrary measure space. Assume that either $\cal L^{\ii}, \cal L^1\su \cal L_h^0$ and
$T\in DS$ or $\cal L^{\ii}, \cal L^1\su  \cal L^0$ and $T\in DS^+$. Then the averages (\ref{eq01})
converge a.e. for all $f\in \cal L^1$.
\end{teo}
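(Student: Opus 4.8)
The plan is to strip away the two formal reductions already signalled just before the statement and then invoke the classical Dunford--Schwartz pointwise ergodic theorem \cite{DS} (see also \cite[Theorem VIII.6.6]{ds}); there is no new content to produce here, only bookkeeping.

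First I would reduce to a $\sigma$-finite measure. Fix $f\in\cal L^1$. For each $k\ge 0$ we have $T^k(f)\in\cal L^1$, so $\{T^k(f)\ne 0\}=\bigcup_{m\ge 1}\{|T^k(f)|>1/m\}$ has $\sigma$-finite measure, hence so does $\Om_0:=\bigcup_{k\ge 0}\{T^k(f)\ne 0\}$. Off $\Om_0$ every average $a_n(f)$ vanishes, so a.e.\ convergence there is trivial. On $\Om_0$ put $T_0(g)=T(\wt g)\cdot\chi_{\Om_0}$ for $g\in\cal L^1(\Om_0)+\cal L^\ii(\Om_0)$, where $\wt g$ is the extension of $g$ by zero; then $\|T_0(g)\|_1\le\|g\|_1$ and $\|T_0(g)\|_\ii\le\|g\|_\ii$, so $T_0\in DS$ (and $T_0\in DS^+$ whenever $T\in DS^+$), while, since each $T^k(f)$ is supported in $\Om_0$, one checks $a_n^{T_0}(f|_{\Om_0})=a_n^T(f)|_{\Om_0}$. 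Thus it suffices to prove the theorem with $(\Om,\mu)$ $\sigma$-finite. Second, I would reduce the case $\cal L^\ii,\cal L^1\su\cal L^0$, $T\in DS^+$, to the real case: positivity of $T$ yields $T(\cal L^1_h)\su\cal L^1_h$ and there $T$ is a (real) Dunford--Schwartz operator; writing a complex $f\in\cal L^1$ as $f=f_1+if_2$ with $f_j\in\cal L^1_h$ gives $a_n(f)=a_n(f_1)+ia_n(f_2)$, so the complex case follows from the real one. This is precisely the device recorded in Remark~\ref{r10}. What is left is exactly the classical statement: $(\Om,\mu)$ $\sigma$-finite, $\cal L^\ii,\cal L^1\su\cal L^0_h$, $T\in DS$, $f\in\cal L^1$.

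To indicate where the substance lies, the classical argument proceeds along familiar lines: (i) by Proposition~\ref{p32}, $|a_n^T(f)|\le a_n^{|T|}(|f|)$ with the linear modulus $|T|$ a positive $\cal L^1$-contraction, so Hopf's maximal ergodic inequality gives $\lb\,\mu\{\sup_n|a_n^T(f)|>\lb\}\le\|f\|_1$; (ii) by the Banach principle the set of $f\in\cal L^1$ along which $a_n(f)$ converges a.e.\ is $\|\cdot\|_1$-closed; (iii) since $\cal L^2$ is a fully symmetric space, $\|T\|_{\cal L^2\to\cal L^2}\le 1$ by (\ref{eq111}), so the mean ergodic theorem gives $\cal L^2=\ker(I-T)\oplus\ol{(I-T)\cal L^2}$, on $\ker(I-T)$ the averages are constant in $n$, and on $(I-T)(\cal L^2\cap\cal L^\ii)$ they tend to $0$ a.e.\ because $\|T^n g\|_\ii$ stays bounded. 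The one genuinely delicate point is converting this $\cal L^2$-based density into $\cal L^1$-density when $\mu(\Om)=\ii$; in \cite{DS,ds} this is handled via the Hopf decomposition $\Om=C\sqcup D$, where on the dissipative part $D$ the series $\sum_k|T^k(f)|$ converges a.e.\ so $a_n(f)\to 0$ there, while on the conservative part $C$ one invokes the Chacon--Ornstein machinery. I expect this infinite-measure $\cal L^1$-density step to be the only real obstacle, and, being classical, I would cite it rather than reprove it; the two reductions above are routine.
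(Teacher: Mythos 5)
Your proposal is correct and follows the paper's own route: the paper gives no independent proof of Theorem \ref{t2}, but obtains it exactly as you do, by the $\sigma$-finiteness observation (the union of the supports of the $T^k(f)$ is $\sigma$-finite), the reduction of the complex $DS^+$ case to the real $DS$ case recorded in Remark \ref{r10}, and a citation of the classical Dunford--Schwartz theorem \cite{DS}, \cite[Theorem VIII.6.6]{ds}. Your additional sketch of the classical argument is accurate but not needed, since the paper likewise treats that part as a black box.
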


Below is an extension of Theorem \ref{t2} to the fully symmetric space  $\cal R_\mu$.

\begin{teo}\label{t3} Let $(\Om,\mu)$  be a  measure space. Assume that either $\cal L^{\ii}, \cal L^1\su \cal L_h^0$ and
$T\in DS$ or $\cal L^{\ii}, \cal L^1\su  \cal L^0$ and $T\in DS^+$. If $f \in \cal R_\mu$, then the averages (\ref{eq01})
converge a.e. to some $\widehat f \in  \cal R_\mu$.
\end{teo}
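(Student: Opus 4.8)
The plan is to reduce to the $\cal L^1$ case settled by Theorem \ref{t2}, using the decomposition furnished by Proposition \ref{p22}, and then to recognize the limit as an element of $\cal R_\mu$ by a rearrangement argument. First, by Remark \ref{r10} I would assume $\cal L^\ii,\cal L^1\su\cal L_h^0$ and $T\in DS$. Since $T(\cal R_\mu)\su\cal R_\mu$, every $T^k(f)$ belongs to $\cal R_\mu$ and hence satisfies $\mu\{|T^k(f)|>\lb\}<\ii$ for each $\lb>0$, so $f$ and all $T^k(f)$ — and therefore all the averages $a_n(f)$, together with their eventual a.e.\ limit — are supported on one $\sigma$-finite set $\Om_0$. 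Consequently the non-increasing rearrangements of these functions are well defined (they depend only on $(\Om_0,\mu|_{\Om_0})$), exactly as in the passage to $\sigma$-finiteness used for Theorem \ref{t2}.

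Next, fix $\ep>0$ and write $f=g_\ep+h_\ep$ with $g_\ep\in\cal L^1$ and $\|h_\ep\|_\ii\le\ep$, as in Proposition \ref{p22}. Theorem \ref{t2} gives $a_n(g_\ep)\to\widehat{g_\ep}$ a.e., with $\widehat{g_\ep}$ finite a.e. Since $\|T\|_{\cal L^\ii\to\cal L^\ii}\le 1$, one has $\|a_n(h_\ep)\|_\ii\le\ep$ for every $n$, so from $a_n(f)=a_n(g_\ep)+a_n(h_\ep)$ it follows that
$$
\limsup_n a_n(f)-\liminf_n a_n(f)=\limsup_n a_n(h_\ep)-\liminf_n a_n(h_\ep)\le 2\ep\quad\text{a.e.}
$$
Applying this with $\ep=1/m$ and letting $m\to\ii$, I conclude that $\{a_n(f)\}$ converges a.e.\ to a function $\widehat f$; from $|\widehat f-\widehat{g_\ep}|\le\ep$ a.e.\ one also sees that $\widehat f$ is finite a.e.

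It remains to show $\widehat f\in\cal R_\mu$. Iterating the relation $Tg\prec\prec g$ and using transitivity of $\prec\prec$ gives $T^k(f)\prec\prec f$ for all $k$, and since the functionals $g\mapsto\int_0^s\mu_t(g)\,dt=\sup\{\int_A|g|\,d\mu:\ \mu(A)\le s\}$ are subadditive and positively homogeneous, it follows that $a_n(f)\prec\prec f$ for all $n$. A.e.\ convergence implies $a_n(f)\to\widehat f$ in measure on every set of finite measure, so Fatou's lemma gives $\int_A|\widehat f|\,d\mu\le\liminf_n\int_A|a_n(f)|\,d\mu\le\int_0^s\mu_t(f)\,dt$ whenever $\mu(A)\le s$; taking the supremum over such $A$ yields $\widehat f\prec\prec f$. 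Since $f\in\cal R_\mu$ and $\cal R_\mu$ is fully symmetric, $\widehat f\in\cal R_\mu$.

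The genuinely substantive step is the last one — deducing $\widehat f\prec\prec f$ from a.e.\ convergence — which rests on the lower semicontinuity under convergence in measure of the $K$-functionals $g\mapsto\int_0^s\mu_t(g)\,dt$ and on the stability of $\prec\prec$ under averaging. By contrast, the a.e.\ convergence itself is a routine approximation argument once Proposition \ref{p22} and Theorem \ref{t2} are available; the only mild care required is the reduction to a $\sigma$-finite carrier so that rearrangements are meaningful.
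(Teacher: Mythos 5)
Your reduction to a $\sigma$-finite carrier and your proof of a.e.\ convergence are essentially the paper's own argument: the same decomposition from Proposition \ref{p22}, Theorem \ref{t2} on the $\cal L^1$ part, and the uniform bound $\|a_n(h_\ep)\|_\ii\le\ep$ on the remainder (the paper merely runs all the decompositions $\ep=1/k$ simultaneously instead of one $\ep$ at a time). The problem is in the final step. To pass from $\int_A|\widehat f|\,d\mu\le\int_0^s\mu_t(f)\,dt$ for all $A$ with $\mu(A)\le s$ to $\widehat f\prec\prec f$, you invoke the identity $\int_0^s\mu_t(g)\,dt=\sup\{\int_A|g|\,d\mu:\ \mu(A)\le s\}$. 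That identity holds only for \emph{resonant} measure spaces (non-atomic, or completely atomic with atoms of equal mass) and fails in general: if $\Om$ is a single atom of measure $2$ and $g=\mathbf 1$, then $\int_0^1\mu_t(g)\,dt=1$, while the only measurable set of measure at most $1$ is the null set, so the supremum is $0$. The theorem is stated for an arbitrary measure space, and the $\sigma$-finite carrier you restrict to may well contain atoms of unequal mass, so your supremum over sets $A$ does not recover $\int_0^s\mu_t(\widehat f)\,dt$ and the conclusion $\widehat f\prec\prec f$ does not follow as written.

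The gap is local and can be closed in two ways. The paper's route avoids any duality formula: a.e.\ convergence gives $a_n(f)\to\widehat f$ in the measure topology, hence $\mu_t(a_n(f))\to\mu_t(\widehat f)$ for a.e.\ $t>0$, and then Fatou's lemma applied on $(0,s)$ together with $a_n(f)\prec\prec f$ yields $\int_0^s\mu_t(\widehat f)\,dt\le\liminf_n\int_0^s\mu_t(a_n(f))\,dt\le\int_0^s\mu_t(f)\,dt$ directly. (The paper also uses the Fatou property of $\cal L^1+\cal L^\ii$ to see first that $\widehat f\in\cal L^1+\cal L^\ii$, so that $\mu_t(\widehat f)$ is defined; you should make that point too.) Alternatively, your argument survives if you replace the indicators $\chi_A$ by test functions $h$ with $0\le h\le 1$ and $\int h\,d\mu\le s$: the identity $\int_0^s\mu_t(g)\,dt=\sup\int|g|\,h\,d\mu$ over such $h$ does hold on arbitrary $\sigma$-finite measure spaces, the Hardy--Littlewood inequality still gives $\int|a_n(f)|\,h\,d\mu\le\int_0^s\mu_t(a_n(f))\,dt\le\int_0^s\mu_t(f)\,dt$, and Fatou applies to $\int|a_n(f)|\,h\,d\mu$ exactly as in your text.
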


\begin{proof}
Without loss of generality, assume that $f\geq 0$.
By Proposition \ref{p22}, given $k=1,2, \dots$, there are $0 \leq g_k\in \cal L^1$ and $0 \leq h_k\in \cal L^{\ii}$ such that
$$
f=g_k+h_k \text{ \ and \ } \| h_k\|_{\ii}\leq \frac 1k.
$$
Since $\{ g_k\} \su \cal L^1$  it follows from Theorem \ref{t2}  that the averages (\ref{eq01}) converge a.e. for each $g_k $. As
 $\{ a_n(g_k)\} \subset \cal L_h^1$, we can assume without loss of generality that
$$
\limsup_n a_n(g_k)(\om)=\liminf_n a_n(g_k)(\om)
$$
for all $\om \in \Om$ and every $k$. Then, for a fixed $k$ and $\om \in \Om$, we have
$$
0 \leq \Delta(\om)=\limsup_n a_n(f)(\om)-\liminf_n a_n(f)(\om)=
$$
$$
=\limsup_n a_n(h_k)(\om)-\liminf_n  a_n(h_k)(\om)\leq 2 \sup_n | a_n(h_k)(\om)|,
$$
which together with $\| a_n(h_k) \|_{\ii}\leq \frac 1k$, $n=1,2,\dots$,  implies that there exists $\Om_k \in \Om$ with $\mu(\Om\sm\Om_k)=0$
such that $0 \leq \Delta(\om)\leq \frac 2k$ whenever $\om \in \Om_k$. Then, letting
$\Om_f=\cap_k\Om_k$, we obtain $\mu(\Om\sm\Om_f)=0$ and
$\Delta(\om)\leq \frac 2k$ for all $\om\in\Om_f$ and every $k$.
Therefore $\Delta(\om)=0$, hence
$$
\limsup_n a_n(f)(\om)=\liminf_n a_n(f)(\om), \ \ \om\in\Om_f,
$$
and we conclude that the sequence $\{ a_n(f)\}$ converges a.e. to a $\mu-$measurable function $\widehat f$ on $\Om$.
Note that, since $\cal L^0$ is complete in the measure topology, $\widehat f$ cannot be infinite on a set of positive measure,
hence $\widehat f\in \cal L^0$.

Since  $\cal L^1+\cal L^{\infty}$ satisfies Fatou property, its unit ball is closed in measure topology $t_\mu$, and (\ref{eq111}) implies that $\widehat f \in \cal L^1+\cal L^{\infty}$.

Since $ a_n(f)\to \widehat f$ in $t_\mu$, it follows that  $$
\mu_t( a_n(f))\to \mu_t(\widehat f) \ \ \text{a.e.  on} \ \ (0,\ii)
$$
(see, for example, \cite[Ch.II, \S 2, Property $11^\circ$]{kps}).
Besides, the inclusion $T \in DS$ implies  that    $\mu_t( a_n(f))\prec\prec \mu_t( f)$ for everyl $n$ (see, for example,
\cite[Ch.II, \S 3, Sec.4]{kps}). Utilizing Fatou property for $\cal L^1(0,s)$ and the measure convergence
$$
\mu_t( a_n(f))\to \mu_t(\widehat f) \ \ \text{on} \ \  (0,s),
$$
we derive
$$
\int \limits_0^s\mu_t(\widehat f)dt \leq \sup_{n\geq1} \int \limits_0^s\mu_t(a_n(f))dt \leq \int\limits_0^s\mu_t(\widehat f)dt
$$
for all $s>0$, that is, $\mu_t(\widehat f)\prec\prec \mu_t( f)$.
Since $\cal R_\mu$ is a fully symmetric space and $f\in \cal R_\mu$, it follows that $\widehat f\in \cal R_\mu$.
\end{proof}

Now we present a version of Theorem \ref{t3} for a fully symmetric space $E \subset  \cal R_\mu$.

\begin{teo}\label{t5} Let $(\Om,\mu)$  be an infinite measure space, and let $E$ be a fully symmetric space with
$\mathbf 1\notin E$. Assume that either $E \su \cal L_h^0$ and $T\in DS$ or $E \su \cal L^0$ and $T\in DS^+$.
Then for every \ $f \in E$ the averages (\ref{eq01})
converge a.e. to some \ $\widehat f \in  E$.
\end{teo}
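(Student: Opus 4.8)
The plan is to reduce the statement to Theorem \ref{t3} and then upgrade the membership of the limit from $\cal R_\mu$ to $E$ by a submajorization argument of the same type that closes the proof of Theorem \ref{t3}. First I would note that, since $\mu(\Om)=\ii$ and $\mathbf 1\notin E$, Proposition \ref{p21} gives $E\su \cal R_\mu$. Hence every $f\in E$ lies in $\cal R_\mu$, and Theorem \ref{t3} applies in whichever of the two settings we are in (recall $DS^+\su DS$, so the hypotheses of Theorem \ref{t3} are met): there exists $\widehat f\in \cal R_\mu\su \cal L^1+\cal L^\ii$ with $a_n(f)\to \widehat f$ a.e. It remains to show $\widehat f\in E$.

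Next I would establish $\widehat f\prec\prec f$. Since $T\in DS$ we have $T g\prec\prec g$ for all $g\in \cal L^1+\cal L^\ii$, so by transitivity $T^k f\prec\prec f$ for every $k$; as the functional $s\mapsto \int_0^s\mu_t(\cdot)\,dt$ is subadditive and positively homogeneous, averaging yields $a_n(f)\prec\prec f$ for every $n$. A.e. convergence $a_n(f)\to \widehat f$ implies convergence in $t_\mu$, whence $\mu_t(a_n(f))\to \mu_t(\widehat f)$ for a.e.\ $t\in(0,\ii)$. Applying the Fatou property of $\cal L^1(0,s)$ exactly as in the proof of Theorem \ref{t3}, for each $s>0$ one gets
$$
\int_0^s\mu_t(\widehat f)\,dt\ \le\ \sup_{n\ge 1}\int_0^s\mu_t(a_n(f))\,dt\ \le\ \int_0^s\mu_t(f)\,dt,
$$
that is, $\widehat f\prec\prec f$. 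Finally, since $E$ is fully symmetric and $f\in E$, the relation $\widehat f\prec\prec f$ forces $\widehat f\in E$, and in fact $\|\widehat f\|_E\le\|f\|_E$; this completes the argument.

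I do not expect a serious obstacle here: the proof is essentially bookkeeping on top of Theorem \ref{t3}. The one point needing a little care is that $E$ is only assumed fully symmetric, not necessarily to have the Fatou property, so one may not invoke closedness of its unit ball in $t_\mu$; this is sidestepped by passing through the submajorization inequality (whose verification uses only the Fatou property of the concrete spaces $\cal L^1(0,s)$) and then appealing directly to full symmetry of $E$. One should also check the routine fact, already used implicitly above, that convergence in $t_\mu$ yields a.e.\ convergence of the non-increasing rearrangements, which is the cited property $11^\circ$ from \cite{kps}.
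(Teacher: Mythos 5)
Your proposal is correct and follows essentially the same route as the paper: Proposition \ref{p21} to get $E\su\cal R_\mu$, Theorem \ref{t3} for the a.e.\ convergence to some $\widehat f\in\cal R_\mu$, and then the same submajorization argument ($\widehat f\prec\prec f$ via the Fatou property of $\cal L^1(0,s)$, followed by full symmetry of $E$) that closes the proof of Theorem \ref{t3}. The paper compresses this last step into ``it follows as in Theorem \ref{t3}''; you have simply written it out, including the correct observation that one cannot assume $E$ itself has the Fatou property.
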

\begin{proof}
By Proposition \ref{p21}, $E \subset  \cal R_\mu$. Then, by Theorem \ref{t3}, given $f \in E$, the averages (\ref{eq01})
converge a.e. to some $\widehat f \in \cal R_\mu$. Since $E$ is a fully symmetric space, it follows as in Theorem \ref{t3} that
$\widehat f \in   E$.
\end{proof}

The next theorem implies that, in the model case $\Om=(0,\ii)$, if
a symmetric space $E\su \cal L^1+\cal L^{\infty}$ is such that $E\sm \cal R_\mu\neq \emptyset$,
then Dunford-Schwartz pointwise ergodic theorem does nod hold in $E$.

\begin{teo}\label{t4}
Let $\Om =(0, \infty)$, and let $\mu$ be Lebesgue measure. Then, given $f \in (\cal L^1+\cal L^{\infty}) \setminus \cal R_\mu$, there exists $T\in DS$ such that the averages $a_n(\mu_t(f))$ do not converge a.e.
\end{teo}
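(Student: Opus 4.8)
The plan is to reduce the statement to the single function $\mathbf 1$ and then exhibit one explicit, non‑positive Dunford–Schwartz operator. Since $f\in(\cal L^1+\cal L^\ii)\sm\cal R_\mu$, the non‑increasing function $t\mapsto\mu_t(f)$ does not tend to $0$, so $\al:=\lim_{t\to\ii}\mu_t(f)\in(0,\ii)$. Regarding $\mu_t(f)$ as a function of $t$ on $\Om=(0,\ii)$, I would write $\mu_t(f)=\al\mathbf 1+h$, where $h=\mu_t(f)-\al\ge 0$ is non‑increasing and tends to $0$; hence $\mu_s(h)=h(s)\to 0$, so $h\in\cal R_\mu$ (and $h\in\cal L^1+\cal L^\ii$ since $\int_0^1 h(s)\,ds\le\int_0^1\mu_s(f)\,ds<\ii$). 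Therefore, for \emph{any} $T\in DS$ acting on real functions, Theorem~\ref{t3} yields a.e.\ convergence of $a_n(h)$; since $a_n(\mu_t(f))=a_n(h)+\al\,a_n(\mathbf 1)$ and $\al>0$, it suffices to produce one $T\in DS$ for which $a_n(\mathbf 1)$ diverges on a set of positive measure (the same $T$ will then work for every such $f$).

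Next I would observe that $T$ cannot be positive: if $T\in DS^+$, then $0\le T\mathbf 1\le\|T\mathbf 1\|_\ii\mathbf 1\le\mathbf 1$, so by positivity $T^{k+1}\mathbf 1=T(T^k\mathbf 1)\le T^k\mathbf 1$ for every $k$, whence $T^k\mathbf 1$ decreases a.e.\ and $a_n(\mathbf 1)$ converges a.e. So I would take a sign‑weighted shift: fix $\ep_m\in\{-1,1\}$, $m\ge 0$, let $\ep$ be the $\{\pm1\}$‑valued function on $(0,\ii)$ equal to $\ep_m$ on $[m,m+1)$, and define
$$(Tf)(x)=\ep(x)\,f(x+1),\qquad x\in(0,\ii).$$
Then $\|Tf\|_\ii\le\|f\|_\ii$ and $\|Tf\|_1=\int_0^\ii|f(x+1)|\,dx=\int_1^\ii|f|\le\|f\|_1$, so $T\in DS$ — it is multiplication by the unimodular $\ep$ composed with the shift $f\mapsto f(\cdot+1)$, each of which lies in $DS$. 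Iterating, $T^kf(x)=\big(\prod_{i=0}^{k-1}\ep(x+i)\big)f(x+k)$, so for $x\in[0,1)$ one gets $T^k\mathbf 1(x)=\ep_0\ep_1\cdots\ep_{k-1}=:s_k$ (with $s_0=1$): a $\{\pm1\}$‑valued sequence that is constant in $x$ on $[0,1)$.

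Since $\ep_{k-1}=s_k/s_{k-1}$, every $\{\pm1\}$‑sequence $(s_k)$ with $s_0=1$ is realizable. I would then pick $0=N_0<N_1<N_2<\cdots$ with $N_{j+1}>4N_j$ and put $s_k=(-1)^{j}$ for $N_j\le k<N_{j+1}$; bounding the sum over the completed blocks by the length of the penultimate block gives $\frac1{N_{2j}}\sum_{k=0}^{N_{2j}-1}s_k<-\tfrac12$ and $\frac1{N_{2j+1}}\sum_{k=0}^{N_{2j+1}-1}s_k>\tfrac12$ for large $j$. Hence $a_n(\mathbf 1)(x)=\frac1n\sum_{k=0}^{n-1}s_k$ fails to converge for every $x\in[0,1)$, and combining this with the decomposition above, $a_n(\mu_t(f))$ fails to converge on $[0,1)$ — a set of positive measure.

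The only genuinely non‑routine point is the middle step: recognizing that positivity of $T$ forces convergence (because $T^k\mathbf 1$ decreases), so that a non‑positive $DS$ operator must be used, and then noticing that for a sign‑weighted shift the partial products $s_k$ of the weights can be prescribed at will. The remaining ingredients — the elementary divergence of the Cesàro means of a $\pm1$ sequence built from rapidly growing constant blocks, the verification that $T\in DS$, and the application of Theorem~\ref{t3} to the $\cal R_\mu$‑part in its real‑valued ($DS$, not $DS^+$) form — are all straightforward.
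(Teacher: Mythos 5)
Your proof is correct, and its engine is the same as the paper's: a unit shift on $(0,\ii)$ twisted by a $\pm1$-valued weight whose partial products form long blocks of constant sign with rapidly growing lengths, so that the Ces\`aro means oscillate between values above $\tfrac12$ and below $-\tfrac12$ on an interval of positive measure. Where you genuinely depart from the paper is the preliminary reduction: the paper works with $\mu_t(f)$ directly (after normalizing to $\mu_t(f)\ge 1$) and chooses the block endpoints $n_1<n_2<\cdots$ inductively, with $n_2$ depending on $\mu_\ep(f)$, so the operator it builds depends on $f$; you instead split $\mu_t(f)=\al\mathbf 1+h$ with $\al>0$ and $h\in\cal R_\mu$, invoke Theorem~\ref{t3} to dispose of $a_n(h)$, and are left with the single function $\mathbf 1$, for which one fixed choice of blocks ($N_{j+1}>4N_j$) suffices. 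This buys a slightly stronger conclusion --- one universal $T\in DS$ witnesses divergence for every $f\in(\cal L^1+\cal L^{\ii})\sm\cal R_\mu$ simultaneously --- at the cost of leaning on Theorem~\ref{t3}, whereas the paper's argument is self-contained. Your side remark that no positive $T$ can work (since $T\in DS^+$ forces $T^k\mathbf 1$ to decrease) is a worthwhile observation the paper leaves implicit, and the supporting details you cite ($h\in\cal L^1+\cal L^{\ii}$, $\mu_s(h)=h(s)\to 0$, $T\in DS$, realizability of an arbitrary $\{\pm1\}$ partial-product sequence) all check out.
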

\begin{proof}
Since  $f \in (\cal L^1+\cal L^{\infty}) \setminus \cal R_\mu$, it follows that $\mu_t(f) \geq \ep$ for all $t >0$ and some $\varepsilon >0$. Without loss of generality we can assume that $\mu_t(f) \geq 1$ for all $t >0$.

Let $\{n_k\}_{k=1}^{\infty}$ be an increasing sequence of integers with $n_0=0$ (the choice of this sequence is given below).  Consider the function
$$
\varphi(t)=\sum_{k=0}^{\infty}\left (\chi_{[n_k,n_{k +1}-1)}(t) - \chi_{[n_{k +1}-1,n_{k+1})}(t)\right )
$$
and the operator $T$ in $\cal L^1+\cal L^{\ii}$ defined by
$$
T(f)(t) = \varphi(t) f(t+1), \ \ f \in \cal L^1+\cal L^{\ii}.
$$
It is clear that $T \in DS$  and
$$
a_n(\mu_t(f))=
=\frac 1 n \left (\mu_{t}(f)+\sum_{k=1}^{n-1}\varphi(t)\varphi(t+1)\cdot \dotsc \cdot\varphi(t+k-1)\mu_{t+k}(f)\right ).
$$
Show that the averages $a_n(\mu_t(f))$ do not converge a.e. Fix $\ep\in (0,1)$. If $t \in (\ep, 1)$, then
$$
a_{n_1}(\mu_t(f)) =\frac 1 {n_1}\sum_{m=0}^{n_1-1}\mu_{t+m}(f)\ge 1.
$$
Next, since
$$
a_{n_2}(\mu_t(f)) =\frac 1 {n_2}\left (\sum_{m=0}^{n_1-1}\mu_{t+m}(f)-\sum_{m=n_1}^{n_2-1}\mu_{t+m}(f)\right )\leq
\frac 1{n_2}\left (n_1\mu_\ep(f)-(n_2-n_1)\right ),
$$
one can choose big enough $n_2 $ so that $a_{n_2}(\mu_t(f))<-\frac 12$ for every $t\in (\ep,1)$. As
$$
a_{n_3}(\mu_t(f)) =\frac 1 {n_3}\left (\sum_{m=0}^{n_1}\mu_{t+m}(f)-\sum_{m=n_1+1}^{n_2-1}\mu_{t+m}(f)
+\sum_{m=n_2+1}^{n_3-1}\mu_{t+m}(f)\right ),
$$
there exists $n_3$ such that $a_{n_3}(\mu_t(f))>\frac 12$ for every $t\in (\ep,1)$.

Continuing this process, we construct a sequence of positive integers $n_1 < n_2 < ...$ such that for every $t \in (\ep, 1)$,
$$
a_{n_{2k-1}}(\mu_t(f)) >\frac 12  \text{ \ \ and \ \ }
a_{n_{2k}}(\mu_t(f)) <- \frac 1 2, \ \ k=1,2,\dots
$$
Thus, the averages $a_n(\mu_t(f))$ diverge for each $t \in (\ep, 1)$.
\end{proof}

\section{Weigthed ergodic theorem for Dunford-Schwartz operators   in fully symmetric spaces}

Let $(\Om,\mu)$  be a measure space. Given $T\in DS$, a bounded sequence
$\ol \bt =\{ \bt_k\}_{k=0}^{\ii}$ of complex numbers is called a {\it good weight} for $T$
if the sequence of weigthed ergodic averages
\begin{equation}\label{eq41}
a_n(\ol \bt, f)=\frac 1n \sum_{k=0}^{n-1} \beta_k T^k(f)
\end{equation}
converges $\mu-$a.e. for every $f \in \cal L^1 $ (see, for example, \cite{clo}).

 In \cite{bo,bl,clo,lot}, various classes of good weights for $T\in DS$ in $L^p-$spaces of real valued functions on finite and infinite measure spaces were studied. In particular, it follows from a general measure space extension
 of \cite[Theorem 2.19]{bo} given  in \cite{clo}  that bounded Besicovitch sequences are good weights
for any $L^1-$contraction with mean ergodic modulus,  in particular, for Dunford-Schwartz operators,
in an arbitrary measure space. The convergence also holds when
the functions are not assumed to be real valued but $T\in DS^+$; see Theorem \ref{t4} below.

Let $\Bbb C_1$ be the unit circle in the field $\mathbb C$  of complex numbers, and let $\mathbb Z$ be the set of all integers. A function $P : \mathbb Z \to \mathbb C$ is said to be a {\it trigonometric polynomial} if
$P(k)=\sum_{j=1}^{s} z_j\lb_j^k$, $k\in \mathbb Z$, for some $s\in \mathbb N$, $\{ z_j \}_1^s \subset \mathbb C$, and $\{ \lb_j \}_1^s \subset \mathbb C_1$.
A sequence $\{ \beta_k \} \subset \Bbb C$ is called a {\it bounded Besicovitch sequence} if

(i) $| \beta_k | \leq C < \ii$ for all $k$;

(ii) for every $\ep >0$ there exists a trigonometric polynomial $P$ such that
\begin{equation}\label{eq041}
\limsup_n \frac 1n \sum_{k=0}^{n-1} | \beta_k - P(k) | < \ep .
\end{equation}

From what was just noticed and taking into account Remark \ref{r10} we have the following.
\begin{teo}\label{t4} Assume that either $T\in DS$ and $\cal L^1\su \cal L^0_h$ or $T\in DS^+$ in $\cal L^1\su \cal L^0$.
Then any bounded Besicovitch sequence $\ol \bt=\{ \beta_k \}$ is a good weight for $T$.
\end{teo}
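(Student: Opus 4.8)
The plan is to reduce the statement immediately to real-valued functions with $T\in DS$, and then to quote the arbitrary-measure-space version of Bourgain's return times theorem discussed above.

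The reduction is exactly Remark \ref{r10}. Assume the theorem is known when $\cal L^{\ii},\cal L^1\su\cal L_h^0$ and $T\in DS$. If instead $\cal L^1\su\cal L^0$ and $T\in DS^+$, then $T$ maps $\cal L_h^1$ into itself, so its restriction there is, after the identification of Section 3, a $DS$ operator on a space of real-valued functions; for $f\in\cal L^1$, writing $f=f_1+if_2$ with $f_1,f_2\in\cal L_h^1$ gives $a_n(\ol\bt,f)=a_n(\ol\bt,f_1)+i\,a_n(\ol\bt,f_2)$, and a.e. convergence in the complex case follows from the real one. Hence it suffices to treat $T\in DS$ with $\cal L^{\ii},\cal L^1\su\cal L_h^0$.

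For this case I would invoke the general measure space extension, given in \cite{clo}, of \cite[Theorem 2.19]{bo}: every bounded Besicovitch sequence is a good weight for any $L^1$-contraction whose linear modulus is mean ergodic, and in particular for every Dunford-Schwartz operator. That $T$ falls into this scope is immediate: $T\in DS$, and by Theorem \ref{t33} together with Proposition \ref{p32}(ii) its linear modulus $|T|$ exists, satisfies $\|\,|T|\,\|_1=\|T\|_1\le 1$ and $\|\,|T|\,\|_{1,\ii}=\|T\|_{1,\ii}\le 1$, and is thus an $AC^+$, hence a $DS^+$, operator. Therefore $\ol\bt$ is a good weight for $T$, which is the assertion.

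It remains to say where the real difficulty lies. The reduction to real-valued functions and the check that $|T|\in DS^+$ are routine; the substance is the cited return-times/Besicovitch-weights theorem itself. Its proof (\cite{clo}, after \cite{bo}) first disposes of trigonometric-polynomial weights $P(k)=\sum_{j=1}^{s}z_j\lb_j^k$, for which $a_n(P,f)=\sum_{j=1}^{s}z_j\,\frac1n\sum_{k=0}^{n-1}(\lb_j T)^k f$ and convergence reduces, via the rotated operators $\lb_j T$, to the unweighted Dunford-Schwartz ergodic theorem; it then passes to an arbitrary bounded Besicovitch sequence $\ol\bt$ by means of the estimate
$$
\bigl|a_n(\ol\bt,f)-a_n(P,f)\bigr|\le\frac1n\sum_{k=0}^{n-1}|\bt_k-P(k)|\,|T|^k(|f|)
$$
(Proposition \ref{p32}(i)), the Dunford-Schwartz (Hopf) maximal ergodic inequality for $|T|$, and a routine $3\ep$-argument. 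The delicate point is the control of the error weights $\bt_k-P(k)$, which are only Cesàro-small by (\ref{eq041}), not uniformly small; this is the main obstacle, and beyond invoking the cited result nothing further is needed here.
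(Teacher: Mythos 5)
Your proposal is correct and follows essentially the same route as the paper: the paper proves this theorem precisely by citing the general-measure-space extension in \cite{clo} of \cite[Theorem 2.19]{bo} (Besicovitch sequences are good weights for $L^1$-contractions with mean ergodic modulus, hence for Dunford-Schwartz operators) and then invoking Remark \ref{r10} to pass from real-valued functions with $T\in DS$ to complex-valued functions with $T\in DS^+$. Your additional sketch of the trigonometric-polynomial approximation and the maximal-inequality argument behind the cited result is accurate but goes beyond what the paper itself records.
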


Now we will show that if $\ol \bt$ is a good weight for
$T\in DS$, then the averages (\ref{eq41}) converge a.e. in any fully symmetric space $E$  with $\mathbf 1\notin E$.

It should be pointed out that the boundedness of a sequence $\ol \bt$ implies that
\begin{equation}\label{eq42}
a_n(\ol \bt,f)(E) \subset E \text{ \ \ and \ \ } \|a_n(\ol \bt,f)\|_{E \to  E} \leq \sup_{k\ge 0} | \bt_k| < \infty.
\end{equation}
for every fully symmetric space $E$ and $T\in DS$.

As before, we begin with the fully symmetric space  $\cal R_\mu$.

\begin{teo}\label{t41} Let $(\Om,\mu)$  be a measure space. Let  $\ol \bt =\{ \bt_k\}_{k=0}^{\ii}$ be a  good weight for $T\in DS$. If $f \in \cal R_\mu$, then the averages (\ref{eq41}) converge a.e. to some $\widehat f \in  \cal R_\mu$.
\end{teo}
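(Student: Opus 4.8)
The plan is to follow the proof of Theorem \ref{t3} almost verbatim, the one essential change being the last step that returns the limit to $\cal R_\mu$: for weighted averages the submajorization $a_n(\ol\bt,f)\prec\prec f$ is no longer available, so the reasoning via the closedness of the unit ball of $\cal R_\mu$ in $t_\mu$ must be replaced. First I would, using the linearity of $f\mapsto a_n(\ol\bt,f)$ and the fact that $\mu_t(\operatorname{Re}f)$, $\mu_t(\operatorname{Im}f)$ and the rearrangements of their positive and negative parts are all $\leq\mu_t(f)$ (so that all these parts belong to $\cal R_\mu$ together with $f$), reduce to the case $0\leq f\in\cal R_\mu$. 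Then, for each $k=1,2,\dots$, Proposition \ref{p22} furnishes $0\leq g_k\in\cal L^1$ and $0\leq h_k\in\cal L^\ii$ with $f=g_k+h_k$ and $\|h_k\|_\ii\leq 1/k$.

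Since $\ol\bt$ is a good weight for $T$ and $g_k\in\cal L^1$, the averages $a_n(\ol\bt,g_k)$ converge a.e.\ to some $\widehat{g_k}$. Because $T\in DS$, $\|T^j(h_k)\|_\ii\leq\|h_k\|_\ii\leq 1/k$, so with $C=\sup_{j\geq0}|\bt_j|<\ii$ we get $\|a_n(\ol\bt,h_k)\|_\ii\leq C/k$ for every $n$. Writing $a_n(\ol\bt,f)=a_n(\ol\bt,g_k)+a_n(\ol\bt,h_k)$ and using that the first summand converges a.e., we obtain, for each fixed $k$ and a.e.\ $\om$,
$$
\limsup_{n,m}\bigl|a_n(\ol\bt,f)(\om)-a_m(\ol\bt,f)(\om)\bigr|\leq 2\sup_n|a_n(\ol\bt,h_k)(\om)|\leq\frac{2C}{k}.
$$
Intersecting the corresponding full-measure sets over $k$ shows that $\{a_n(\ol\bt,f)(\om)\}$ is Cauchy for a.e.\ $\om$, hence converges a.e.\ to a measurable function $\widehat f$.

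It remains to prove $\widehat f\in\cal R_\mu$. Set $\widehat{h_k}=\widehat f-\widehat{g_k}=\lim_n a_n(\ol\bt,h_k)$ (a.e.), so $\|\widehat{h_k}\|_\ii\leq C/k$. For the $\cal L^1$-summand, $\|a_n(\ol\bt,g_k)\|_1\leq C\|g_k\|_1$ for all $n$, and Fatou's lemma applied along $|a_n(\ol\bt,g_k)|\to|\widehat{g_k}|$ gives $\|\widehat{g_k}\|_1\leq C\|g_k\|_1<\ii$, i.e.\ $\widehat{g_k}\in\cal L^1\su\cal R_\mu$. Hence $\widehat f=\widehat{g_k}+\widehat{h_k}\in\cal L^1+\cal L^\ii$ (in particular $\widehat f$ is a.e.\ finite), and, for every $t>0$,
$$
\mu_t(\widehat f)\leq\mu_{t/2}(\widehat{g_k})+\mu_{t/2}(\widehat{h_k})\leq\mu_{t/2}(\widehat{g_k})+\frac{C}{k}.
$$
Since $\widehat{g_k}\in\cal R_\mu$, the right-hand side tends to $C/k$ as $t\to\ii$; as $k$ is arbitrary, $\mu_t(\widehat f)\to 0$, so $\widehat f\in\cal R_\mu$, which completes the argument.

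No serious obstacle is expected: the adaptation is routine once one realizes that the right replacement for the submajorization step is the pair of observations in the last paragraph — the ergodic limit of the $\cal L^1$-part stays in $\cal L^1$ by Fatou's lemma, while the $\cal L^\ii$-part is uniformly small in sup-norm ($\leq C/k$). It is this smallness, rather than mere membership in $\cal L^\ii$ (which would not help, since $\mathbf 1\notin\cal R_\mu$), that forces $\widehat f$ into $\cal R_\mu$. The only points needing a little care are the reduction to $f\geq0$ in the possibly complex-valued setting and the interchange of limits in the Cauchy estimate.
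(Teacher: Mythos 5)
Your proof is correct, and up to the point where a.e.\ convergence of $a_n(\ol\bt,f)$ is established it coincides with the paper's argument: same decomposition $f=g_k+h_k$ from Proposition \ref{p22}, same use of the good-weight hypothesis on the $\cal L^1$-part and of $\|a_n(\ol\bt,h_k)\|_\ii\leq C/k$; your Cauchy formulation is only a cosmetic variant of the paper's $\limsup$--$\liminf$ computation. Where you genuinely diverge is the final step, and your stated reason for diverging is not quite accurate: submajorization \emph{is} available for weighted averages, since $|\bt_j|\leq M(\ol\bt):=\max\{1,\sup_j|\bt_j|\}$ makes $\frac1{M(\ol\bt)}\cdot\frac1n\sum_{j=0}^{n-1}\bt_jT^j$ itself a Dunford--Schwartz operator, whence $a_n(\ol\bt,f)\prec\prec M(\ol\bt)f$; the paper combines this with the Fatou property of $\cal L^1+\cal L^\ii$, closedness of its unit ball in $t_\mu$, and convergence of rearrangements to obtain $\widehat f\prec\prec M(\ol\bt) f$, and then invokes full symmetry of $\cal R_\mu$. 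Your replacement --- $\widehat{g_k}\in\cal L^1$ by Fatou's lemma, $\|\widehat{h_k}\|_\ii\leq C/k$, and the subadditivity $\mu_t(\widehat f)\leq\mu_{t/2}(\widehat{g_k})+\mu_{t/2}(\widehat{h_k})$ forcing $\mu_t(\widehat f)\to0$ --- is correct and more elementary, bypassing the Fatou-property/closed-unit-ball machinery entirely. The trade-off is that it yields only $\widehat f\in\cal R_\mu$, whereas the paper's route delivers the stronger relation $\widehat f\prec\prec M(\ol\bt)f$, which is precisely what is recycled in Theorem \ref{t42} to place $\widehat f$ in an arbitrary fully symmetric $E$ with $\mathbf 1\notin E$; with your ending that extension would still require the submajorization argument. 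Two small housekeeping points: as in the paper, one should first reduce to $\sigma$-finite $\mu$ before speaking of $\mu_t$, and your reduction to $f\geq0$ via the four nonnegative parts of $f$ is fine.
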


\begin{proof}
Without loss of generality assume that  $(\Om,\mu)$ is $\sigma-$finite and $f\geq 0$.
By Proposition \ref{p22}, given $k=1,2, \dots$, there are $g_k\in \cal L^1$ and $h_k\in \cal L^{\ii}$ such that
$$
f=g_k+h_k \text{ \ and \ } \| h_k\|_{\ii}\leq \frac 1k.
$$
Since $\{ g_k\} \su \cal L^1$  and \ ${\bf \beta} =\{ \bt_k\}_{k=0}^{\ii}$ \ is a  good weight for $T$   the averages (\ref{eq41}) converge a.e. for all \ $g_k $. Thus we can assume without loss of generality that
$$
\limsup_n a_n(\ol \bt,g_k)(\om)=\liminf_n a_n(\ol \bt,g_k)(\om)
$$
for all $\om \in \Om$ and every $k$. Then, for a fixed $k$ and $\om \in \Om$, we have
$$
0 \leq \Delta(\om)=\limsup_n a_n(\ol \bt,f)(\om)-\liminf_n a_n(\ol \bt,f)(\om)=
$$
$$
=\limsup_n a_n(\ol \bt,h_k)(\om)-\liminf_n a_n(\ol \bt,h_k)(\om)\leq 2 \sup_n | a_n(\ol \bt,h_k)(\om)|,
$$
which together with $\| a_n(\ol \bt,h_k) \|_{\ii}\leq \frac 1k$, $n=1,2,\dots$,  implies that there exists $\Om_k \in \cal A$ with $\mu(\Om\sm\Om_k)=0$
such that $0 \leq \Delta(\om)\leq \frac 2k$ whenever $\om \in \Om_k$. Then, letting
$\Om_f=\cap_k\Om_k$, we obtain $\mu(\Om\sm\Om_f)=0$ and
$\Delta(\om)\leq \frac 2k$ for all $\om\in\Om_f$ and every $k$.
Therefore $\Delta(\om)=0$, hence
$$
\limsup_n a_n(\ol \bt,f)(\om)=\liminf_n a_n(\ol \bt,f)(\om), \ \ \om\in\Om_f,
$$
and we conclude that the sequence $\{ a_n(\ol \bt,f)\}$ converges a.e. to a $\mu-$measurable function $\widehat f$ on $\Om$.
Note that, since $\cal L^0$ is complete in the measure topology $t_\mu$, the function $\widehat f$ cannot be infinite on a set of positive measure, that is, $\widehat f\in \cal L^0$.

Since  $\cal L^1+\cal L^{\infty}$ satisfies Fatou property, its unit ball is closed in measure, so (\ref{eq42}) implies that $\widehat f \in \cal L^1+\cal L^{\infty}$.

As $a_n(\ol \bt, f)\to \widehat f$  in $t_\mu$, it follows that
$$
\mu_t(a_n(\ol \bt, f))\to \mu_t(\widehat f) \ \ \text{a.e.  on} \ \ (0,\ii)
$$
(see, for example, \cite[Ch.II, \S 2, Property $11^\circ$]{kps}).

Setting $M(\ol \bt)= \max\{1,\sup_{k\geq1} | \bt_k|\}$, we have $\frac1{M(\ol \bt)}(\frac1{n} \sum_{k=0}^{n-1} \beta_k T^k) \in DS$, hence  $\mu_t(\frac1{M(\ol \bt)}a_n(\ol \bt,f))\prec\prec \mu_t( f)$
(see, for example, \cite[Ch.II, \S 3, Section 4]{kps}). Using
Fatou property for $\cal L^1(0,s)$ and the measure convergence
$$
\mu_t\left (\frac1{M(\ol \bt)}a_n(\ol \bt,f)\right )\to \mu_t\left (\frac{\widehat f}{M(\ol \bt)}\right )
$$
on $(0,s)$, we derive
$$
\int \limits_0^s\mu_t\left (\frac{\widehat f}{M(\ol \bt)}\right )dt\leq \sup_{n\geq1}
\int \limits_0^s\mu_t\left (\frac1{M(\ol \bt)}a_n(\ol \bt,f)\right )dt \leq \int\limits_0^s\mu_t\left (\frac{f}{M(\ol \bt)}\right )dt
$$
for all $s>0$, that is, $\mu_t(\widehat f)\prec\prec \mu_t( f)$.
Since $\cal R_\mu$ is a fully symmetric space and $f\in  \cal R_\mu$, it follows that $\widehat f\in \cal R_\mu$.
\end{proof}

Since, by Proposition \ref{p21}, $E \subset  \cal R_\mu$ for every symmetric space $E$ with  $\mathbf 1\notin E$, utilizing Theorem \ref{t41} and repeating the ending of its proof, we obtain the following.

\begin{teo}\label{t42} Let $(\Om,\mu)$ be an infinite measure space, and let $T$ and $\ol \bt$ be as in Theorem \ref{t41}.
Assume that $E$ is a fully symmetric space with  $\mathbf 1\notin E$.  If $f \in E$, then the averages (\ref{eq41}) converge a.e. to some \ $\widehat f \in  E$.
\end{teo}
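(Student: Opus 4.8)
The plan is to deduce Theorem~\ref{t42} from Theorem~\ref{t41} exactly as Theorem~\ref{t5} was deduced from Theorem~\ref{t3}. First I would apply Proposition~\ref{p21}: since $\mu(\Om)=\ii$ and $\mathbf 1\notin E$, the fully symmetric space $E$ satisfies $E\su\cal R_\mu$. Hence any $f\in E$ lies in $\cal R_\mu$, and Theorem~\ref{t41} applies, giving that the weighted averages $a_n(\ol\bt,f)$ converge $\mu$-a.e. to some $\widehat f\in\cal R_\mu$. This already establishes the a.e.\ convergence asserted in the theorem; what remains is to improve the membership $\widehat f\in\cal R_\mu$ to $\widehat f\in E$.

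For that last step I would reproduce the closing argument of the proof of Theorem~\ref{t41}. There one shows successively: that $a_n(\ol\bt,f)\to\widehat f$ in the measure topology $t_\mu$, so $\mu_t(a_n(\ol\bt,f))\to\mu_t(\widehat f)$ a.e.\ on $(0,\ii)$; that $\widehat f\in\cal L^1+\cal L^{\ii}$ because the unit ball of $\cal L^1+\cal L^{\ii}$ is $t_\mu$-closed (Fatou property) and (\ref{eq42}) holds; and finally, using $\mu_t\big(M(\ol\bt)^{-1}a_n(\ol\bt,f)\big)\prec\prec\mu_t(f)$ together with the Fatou property of $\cal L^1(0,s)$, that $\mu_t(\widehat f)\prec\prec\mu_t(f)$, i.e.\ $\widehat f\prec\prec f$. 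At this point, rather than invoking full symmetry of $\cal R_\mu$ as in Theorem~\ref{t41}, I invoke it for $E$: since $E$ is fully symmetric, $f\in E$, and $\widehat f\in\cal L^1+\cal L^{\ii}$ with $\widehat f\prec\prec f$, the definition of a fully symmetric space yields $\widehat f\in E$ (and in fact $\|\widehat f\|_E\le\|f\|_E$), which completes the proof.

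The only routine preliminaries are those already used in Theorem~\ref{t41}: one reduces to $\sigma$-finite $(\Om,\mu)$ by restricting to the $\sigma$-finite carrier of the $T$-orbit of $f$ and to $f\ge 0$, and one disposes of the dichotomy between the cases $E\su\cal L^0_h,\ T\in DS$ and $E\su\cal L^0,\ T\in DS^+$ via Remark~\ref{r10}. I do not expect any genuine obstacle here: the result is essentially a corollary of Theorem~\ref{t41}, the sole extra ingredient being the immediate fact that a fully symmetric space is stable under $\prec\prec$-domination by its own elements. If there is a delicate point at all, it is merely carrying the submajorization $\widehat f\prec\prec f$ over unchanged from the $\cal R_\mu$-valued setting to the $E$-valued one — and that is precisely the content of the final lines of the proof of Theorem~\ref{t41}.
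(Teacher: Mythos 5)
Your proposal is correct and follows exactly the paper's own route: the paper likewise deduces Theorem~\ref{t42} by combining Proposition~\ref{p21} (to get $E\su\cal R_\mu$) with Theorem~\ref{t41}, and then ``repeating the ending of its proof'' --- i.e.\ the submajorization argument yielding $\widehat f\prec\prec f$ --- with the full symmetry of $E$ in place of that of $\cal R_\mu$. Nothing further is needed.
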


In view of Theorems  \ref{t42} and  \ref{t4}, we now have the following.

\begin{cor}\label{c41} Let $(\Om,\mu)$ be an infinite measure space,
and let $E$ be a fully symmetric space  with $\mathbf 1\notin E$.
Assume that either $E\su \cal L^0_h$ and $T\in DS$ or $E\su \cal L^0$ and $T\in DS^+$.  Then
for every $f\in  E$ and a bounded Besicovitch  sequence $\{ \beta_k \}$ the averages (\ref{eq41}) converge a.e. to some
 $\widehat f \in E$.
\end{cor}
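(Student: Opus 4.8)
By Remark~\ref{r10} it suffices to treat the case $E \su \cal L^0_h$ and $T \in DS$. Indeed, if $T \in DS^+$ and $f \in E \su \cal L^0$, then $\mathrm{Re}\,f$ and $\mathrm{Im}\,f$ lie in $E_h := E \cap \cal L^0_h$ (since $\mu_t(\mathrm{Re}\,f),\, \mu_t(\mathrm{Im}\,f) \le \mu_t(f)$ for all $t>0$ and $E$ is symmetric), and since $T(E_h) \su E_h$, a.e.\ convergence of $a_n(\ol\bt,\cdot)$ on $E_h$ yields it on $E$ by splitting $f$ into real and imaginary parts.

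So assume $E \su \cal L^0_h$ and $T \in DS$; the plan is then simply to combine Theorem~\ref{t4} with Theorem~\ref{t42}. Being a bounded Besicovitch sequence, $\ol\bt = \{\bt_k\}$ is a good weight for $T$ by Theorem~\ref{t4}, i.e., the averages $a_n(\ol\bt,g)$ converge $\mu$-a.e.\ for every $g \in \cal L^1$. Since $(\Om,\mu)$ is an infinite measure space and $E$ is a fully symmetric space with $\mathbf 1 \notin E$, Theorem~\ref{t42} applies at once and gives, for each $f \in E$, a.e.\ convergence of the averages (\ref{eq41}) to some $\widehat f \in E$. (The proof of Theorem~\ref{t42} itself runs through Proposition~\ref{p21}, which yields $E \su \cal R_\mu$, and Theorem~\ref{t41}, which settles the space $\cal R_\mu$.)

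Since the corollary is thus a formal consequence of Theorems~\ref{t4} and~\ref{t42}, I do not expect any real obstacle. The only step demanding a line of care is the reduction from the positive, complex-valued situation to the real-valued Dunford--Schwartz one, which is precisely what Remark~\ref{r10} supplies.
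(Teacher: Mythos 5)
Your proposal is correct and follows exactly the paper's route: the corollary is stated there as an immediate consequence of Theorem~\ref{t4} (bounded Besicovitch sequences are good weights for $DS$ operators) combined with Theorem~\ref{t42}, with the positive/complex case handled by Remark~\ref{r10}. Your added detail on splitting $f$ into real and imaginary parts just makes explicit what the remark already asserts.
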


\section{Wiener-Wintner-type ergpdic theorems in fully symmetric spaces}

Assume that $(Y,\nu)$ is a finite measure space and $\phi:Y\to Y$ a measure preserving transformation (m.p.t.).
If $(\Om, \mu)$ is a measure space, $\tau:\Om \to \Om$ a m.p.t., $f:\Om \to \mathbb C$, and $g\in L^1(Y)$, denote
\begin{equation}\label{eq51}
a_n(f,g)(\omega,y)=\frac 1n\sum_{k=0}^{n-1}f(\tau^k\omega)g(\phi^ky)
\end{equation}
Here is an extension of Bourgain's Return Times theorem to $\sigma-$finite measure \cite[p.101]{as0}.

\begin{teo}\label{t51}
Let $(\Om,\mu)$ be a $\sigma-$finite measure space, $\tau: \Om \to \Om$ be a m.p.t., and let $A\su \Om$, $\mu(A)<\ii$.
Then there exists a set $\Om_A \su \Om$ such that $\mu(\Om \sm \Om_A)=0$ and for any $(Y,\nu,\phi)$ and $g\in \cal L^1(Y)$
the averages
$$
a_n(\chi_A,g)(\omega,y)=\frac 1n\sum_{k=0}^{n-1}\chi_A(\tau^k\omega)g(\phi^ky)
$$
converge $\nu-$a.e. for all $\omega\in \Om_A$.
\end{teo}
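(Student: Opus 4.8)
The plan is to reduce the statement to Bourgain's Return Times theorem for \emph{finite} measure-preserving systems, the genuinely new point being the passage from a $\sigma$-finite to a finite ambient space. Fix for a moment a system $(Y,\nu,\phi)$ and $g\in\cal L^1(Y)$. Since $\mu(A)<\ii$ we have $\chi_A\otimes g\in\cal L^1(\Om\times Y,\mu\otimes\nu)$, the product $\tau\times\phi$ is a measure-preserving transformation of the $\sigma$-finite space $\Om\times Y$, and
$$
a_n(\chi_A,g)(\omega,y)=\frac 1n\sum_{k=0}^{n-1}(\chi_A\otimes g)\big((\tau\times\phi)^k(\omega,y)\big),
$$
so by Theorem \ref{t2}, applied to the positive Dunford--Schwartz operator $h\mapsto h\circ(\tau\times\phi)$, these averages converge for $(\mu\otimes\nu)$-almost every $(\omega,y)$; hence for a \emph{fixed} target a conull set $\Om_{Y,\nu,\phi,g}\su\Om$ does the job. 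The content of the theorem is that one conull $\Om_A$ may be chosen simultaneously for \emph{all} $(Y,\nu,\phi)$ and $g$ --- the Return Times phenomenon, which does not follow from the ergodic theorem and Fubini alone.

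To obtain the uniform set I would first discard the trivial part of $\Om$. Put $B=\{\omega\in\Om:\tau^k\omega\in A\text{ for infinitely many }k\ge 0\}$; the set $B$ is forward $\tau$-invariant, and for $\omega\notin B$ we have $\chi_A(\tau^k\omega)=0$ for all large $k$, so $a_n(\chi_A,g)(\omega,y)\to 0$ for every target, every $g$ and every $y$. On $B$ the set $A_0:=A\cap B$ is recurrent for $\tau$ (a point of $A_0$ visits $A$ infinitely often, and such a visit lies again in $B$, hence in $A_0$), so the first-return map $S\omega:=\tau^{r(\omega)}\omega$, with $r(\omega):=\min\{k\ge 1:\tau^k\omega\in A_0\}$, is a measure-preserving transformation of the \emph{finite} space $(A_0,\mu|_{A_0})$ (Kac). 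For $\omega\in B$ let $m=m(\omega)$ be the first time its orbit meets $A_0$, put $\omega':=\tau^m\omega\in A_0$ and $\sigma_j(\omega'):=\sum_{i=0}^{j-1}r(S^i\omega')$, so that $\tau^{\sigma_j(\omega')}\omega'=S^j\omega'$ and the orbit of $\omega$ meets $A$ precisely at the times $m+\sigma_j(\omega')$. Writing out $a_n(\chi_A,g)$ and applying the Birkhoff theorem to the nonnegative function $r$ on $(A_0,S)$ --- which gives $\sigma_N(\omega')/N\to\bar r(\omega')\in[1,\ii]$, hence $N(n)/n\to 1/\bar r(\omega')$ a.e., where $N(n):=\#\{j:\sigma_j(\omega')<n\}$ --- one reduces the a.e.\ convergence of $a_n(\chi_A,g)(\omega,\cdot)$ to that of the sampled averages $\frac 1N\sum_{j=0}^{N-1}g\big(\phi^{\sigma_j(\omega')}y\big)$.

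It thus remains to produce a conull $\Om_{A_0}\su A_0$ such that for every $\omega'\in\Om_{A_0}$, every $(Y,\nu,\phi)$ and every $g\in\cal L^1(Y)$ the averages $\frac1N\sum_{j<N}g(\phi^{\sigma_j(\omega')}y)$ converge $\nu$-a.e. Here too the pointwise convergence by itself is elementary: $\frac1N\sum_{j<N}g(\phi^{\sigma_j(\omega')}y)=\frac1N\sum_{j<N}G(\Psi^j(\omega',y))$, where $\Psi(\omega',y):=(S\omega',\phi^{r(\omega')}y)$ is a measure-preserving transformation of the \emph{finite} space $(A_0\times Y,\mu|_{A_0}\otimes\nu)$ and $G(\omega',y):=g(y)\in\cal L^1$, so the Birkhoff theorem gives convergence for $(\mu\otimes\nu)$-almost every $(\omega',y)$; what must be added is that the conull set of $\omega'$ does not depend on $(Y,\nu,\phi)$ or $g$. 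This is precisely the finite-measure Return Times theorem for $(A_0,S)$, in the form adapted to the averages sampled along the return-time sequence $(\sigma_j)$, and one invokes Bourgain's theorem (equivalently the Bourgain--Furstenberg--Katznelson--Ornstein argument): a maximal inequality, uniform over all targets, coming from the maximal ergodic theorem, a uniform Wiener--Wintner / van der Corput estimate for the weights $(\chi_{A_0}(S^j\omega'))_j$, and the density of trigonometric polynomials. I expect this last step --- upgrading ``a conull set per target'' to ``one conull set for all targets'' in the finite system --- to be the main obstacle; everything before it is routine bookkeeping about the induced transformation, and the transport of $\Om_{A_0}$ back to a conull $\Om_A\su\Om$ is immediate, since $\{\omega\in B:\tau^{m(\omega)}\omega\notin\Om_{A_0}\}\su\bigcup_{m\ge 0}\tau^{-m}(A_0\sm\Om_{A_0})$ is null.
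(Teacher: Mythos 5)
First, a point of comparison: the paper does not prove Theorem \ref{t51} at all --- it is quoted from Assani's unpublished preprint \cite{as0} --- so your proposal has to be judged on its own terms. The reduction you set up (discarding the dissipative part $\Om\sm B$, inducing on $A_0=A\cap B$, and rewriting $a_n(\chi_A,g)$ as $\frac{N(n)}{n}$ times an average of $G(\om',y)=g(y)$ along the orbit of the skew product $\Psi(\om',y)=(S\om',\phi^{r(\om')}y)$) is sound bookkeeping. The gap is in the final step. What you need there is not Bourgain's Return Times theorem for the finite system $(A_0,S)$ but the assertion that, for a.e.\ $\om'$, the return-time sequence $(\sigma_j(\om'))_j$ is a universally good \emph{subsequence} for the pointwise ergodic theorem in $\cal L^1$. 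Bourgain's theorem for $(A_0,S)$ concerns averages $\frac 1N\sum_{j<N}f(S^j\om')g(\phi^{j}y)$, in which the target is moved by $\phi^{j}$, not by $\phi^{\sigma_j(\om')}$; and the weights $(\chi_{A_0}(S^j\om'))_j$ that you propose to control by a Wiener--Wintner/van der Corput estimate are identically equal to $1$, so they carry no information. When $\int_{A_0}r\,d\mu<\ii$ one can recover the sampled statement from the classical theorem by passing through the Kakutani tower, which is then a finite system containing the original time scale; but when $\int_{A_0}r\,d\mu=\ii$ (by Kac's formula, exactly the case $\mu(B)=\ii$ on an ergodic component) the sequence $(\sigma_j(\om'))$ has zero density, the sampled statement is of the ``universally representative sampling scheme'' type studied by Lacey, Petersen, Rudolph and Wierdl, it is strictly \emph{stronger} than what Theorem \ref{t51} asserts, and it does not follow from Bourgain's theorem by any routine adaptation. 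As written, the argument is not closed.

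The repair is to split according to the density of returns rather than to induce. By Theorem \ref{t3} applied to $\chi_A\in\cal L^1$, the limit $D(\om)=\lim_n\frac 1n\sum_{k=0}^{n-1}\chi_A(\tau^k\om)$ exists for all $\om$ outside one null set; $D$ is $\tau$-invariant and $\int_\Om D\,d\mu\leq\mu(A)$ by Fatou, so each $B_m=\{D>1/m\}$ is a $\tau$-invariant set of finite measure. On $B_m$ the map $\tau$ is a finite measure-preserving transformation and $\chi_A(\tau^k\om)=\chi_{A\cap B_m}(\tau^k\om)$, so the classical finite-measure Return Times theorem applies verbatim and yields a universal conull subset of $B_m$; the increasing union over $m$ handles $\{D>0\}$. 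On $\{D=0\}$ no Return Times input is needed: for $g\geq 0$ and $M>0$,
$$
a_n(\chi_A,g)(\om,y)\leq M\cdot\frac 1n\sum_{k=0}^{n-1}\chi_A(\tau^k\om)+\frac 1n\sum_{k=0}^{n-1}(g-M)^{+}(\phi^k y),
$$
whose $\limsup_n$ is at most the conditional expectation $E_\nu\left[(g-M)^{+}\mid \cal I_\phi\right](y)$ for $\nu$-a.e.\ $y$, and this decreases to $0$ a.e.\ as $M\to\ii$; hence $a_n(\chi_A,g)\to 0$ for \emph{every} such $\om$ and $\nu$-a.e.\ $y$, with no further exceptional set in $\Om$. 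This closes the proof using only the finite-measure theorem and subsumes your zero-density case without proving anything about sampled averages.
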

The next theorem is  a version of Theorem \ref {t51} where the functions $\chi_A$ and $g\in \cal L^1(Y)$
are replaced by $f\in \cal L^1(\Om)$ and  $g\in \cal L^\ii(Y)$, respectively.

\begin{teo}\label{t52}
Let $(\Om,\mu)$ be a measure space, $\tau: \Om \to \Om$ a m.p.t., and let $f\in \cal L^1(\Om)$.
Then there exists a set $\Om_f\su \Om$ with $\mu(\Om\sm \Om_f)=0$ such that for any $(Y,\nu,\phi)$
and $g\in \cal L^\ii(Y)$ the averages (\ref{eq51}) converge $\nu-$a.e. for all $\omega\in \Om_f$.
\end{teo}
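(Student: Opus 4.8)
The plan is to derive Theorem \ref{t52} from the $\sigma-$finite Return Times Theorem \ref{t51} by an $\cal L^1-$approximation argument in the spirit of the proofs of Theorems \ref{t3} and \ref{t41}, the new ingredient being the classical maximal ergodic inequality on the $\Om-$side, which controls the approximation error uniformly in the auxiliary system $(Y,\nu,\phi)$ and in $g$. First I would reduce to the case where $(\Om,\mu)$ is $\sigma-$finite. Put $S=\{|f|>0\}$, a $\sigma-$finite set since $f\in\cal L^1$, and let $\Om^\ii_f=\bigcap_N\bigcup_{k\ge N}\tau^{-k}S$. Then $\tau^{-1}\Om^\ii_f=\Om^\ii_f$, while $\Om^\ii_f\su\bigcup_{k\ge 0}\tau^{-k}S$ is $\sigma-$finite; moreover, for $\om\notin\Om^\ii_f$ the orbit of $\om$ meets $S$ only finitely often, so $a_n(f,g)(\om,y)=\frac1n\sum_{k=0}^{n-1}f(\tau^k\om)g(\phi^ky)\to 0$ as $n\to\ii$ for every $(Y,\nu,\phi)$, every $g\in\cal L^\ii(Y)$ and every $y$. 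Hence it is enough to produce the required set inside the $\sigma-$finite space $(\Om^\ii_f,\mu)$ equipped with the m.p.t. $\tau|_{\Om^\ii_f}$; splitting into real and imaginary parts, we may also assume $f$ and $g$ real-valued.

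Working in this $\sigma-$finite setting, choose simple functions $f_m=\sum_{i=1}^{N_m}c^{(m)}_i\chi_{A^{(m)}_i}$ with $\mu(A^{(m)}_i)<\ii$ and $\|f-f_m\|_1\le 2^{-m}$. For each pair $(m,i)$, Theorem \ref{t51} gives a set of full measure off which $a_n(\chi_{A^{(m)}_i},g)(\om,\cdot)$ converges $\nu-$a.e. for all $(Y,\nu,\phi)$ and all $g\in\cal L^1(Y)\supseteq\cal L^\ii(Y)$; intersecting these countably many sets produces a set $\Om_1$ of full measure such that for every $\om\in\Om_1$ and every $m$ the average $a_n(f_m,g)(\om,\cdot)=\sum_i c^{(m)}_i a_n(\chi_{A^{(m)}_i},g)(\om,\cdot)$ converges $\nu-$a.e., simultaneously for all $(Y,\nu,\phi)$ and $g\in\cal L^\ii(Y)$. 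Writing $(|h|)^*(\om)=\sup_n\frac1n\sum_{k=0}^{n-1}|h|(\tau^k\om)$ for the ergodic maximal function of $\tau$, the maximal ergodic inequality furnishes a constant $C$ with $\mu\{(|h|)^*>\lb\}\le C\lb^{-1}\|h\|_1$ for all $h\in\cal L^1$; applied with $h=f-f_m$ and $\lb=1/j$ it yields $\sum_m\mu\{(|f-f_m|)^*>1/j\}\le Cj\sum_m 2^{-m}<\ii$ for each $j$, so by the Borel--Cantelli lemma there is a set $\Om_2$ of full measure on which $(|f-f_m|)^*(\om)\to 0$ as $m\to\ii$.

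Finally set $\Om_f=(\Om\sm\Om^\ii_f)\cup(\Om_1\cap\Om_2)$, a set of full measure in $\Om$. The case $\om\notin\Om^\ii_f$ is already settled, so fix $\om\in\Om_1\cap\Om_2$, an arbitrary system $(Y,\nu,\phi)$, and $g\in\cal L^\ii(Y)$; rescaling, we may take $\|g\|_\ii\le 1$. Off a single $\nu-$null set $N$ (the union over $m$ of the exceptional sets coming from $\Om_1$), every $a_n(f_m,g)(\om,\cdot)$ converges, and for $y\notin N$ and each $m$,
$$
\limsup_n a_n(f,g)(\om,y)-\liminf_n a_n(f,g)(\om,y)\le 2\sup_n|a_n(f-f_m,g)(\om,y)|\le 2\,(|f-f_m|)^*(\om),
$$
the last inequality because $|g(\phi^ky)|\le 1$. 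Letting $m\to\ii$ and using $\om\in\Om_2$, the left-hand side vanishes, so $a_n(f,g)(\om,y)$ converges for $\nu-$a.e. $y$; and $\Om_f$ was defined without reference to $(Y,\nu,\phi)$ or $g$. The one point I expect to require care is the uniformity over all auxiliary systems and all $g$, but that is exactly what the domination by the $\om-$side maximal function $(|f-f_m|)^*$ provides --- a bound independent of $(Y,\nu,\phi)$ and of $g$ with $\|g\|_\ii\le 1$ --- so the familiar approximation-plus-Borel--Cantelli scheme closes the argument.
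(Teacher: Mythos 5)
Your proof is correct and follows essentially the same route as the paper's: approximate $f$ in $\cal L^1$ by simple functions built from finite-measure sets, apply Theorem \ref{t51} to the indicators, and control the error uniformly in $(Y,\nu,\phi)$ and $g$ via the maximal ergodic inequality on the $\Omega$-side. Your only deviations are cosmetic improvements --- a summable choice of $\|f-f_m\|_1$ with Borel--Cantelli in place of the paper's sets $\Om_{m,j}$ and extraction of indices $m_j$, and a more explicit reduction to the $\sigma$-finite case via the invariant set $\limsup_k\tau^{-k}\{|f|>0\}$.
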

\begin{proof} Assume first that $\mu$ is $\sigma-$finite. Fix $f\in \cal L^1(\Om)$. Then there exist
$\{ \lb_{m,i}\} \su \mathbb C$ and
$A_{m,i}\su \Om$ with $\mu(A_{m,i})<\ii$, $m=1,2,\dots$, $1\leq i\leq l_m$, such that
$$
\| f-f_m\|_1\to 0, \text{ \ where \ } f_m=\sum_{i=1}^{l_m}\lb_{m,i}\chi_{A_{m,i}}.
$$
If
$$
\Om_{m,j}=\left \{\omega \in \Om: \sup_n\frac 1n \sum_{k=0}^{n-1}|f-f_m|(\tau^k(\omega)) >\frac 1j \right \},
$$
then, due to the maximal ergodic  inequality, we have
$$
\mu(\Om_{m,j})\leq j\|f-f_m\|_1,
$$
which implies that $\mu (\cap_m \Om_{m,j})=0$ for a fixed $j$. Therefore, denoting
$$
\Om_0=\Om \sm \bigcup_j\bigcap_m \Om_{m,j},
$$
we obtain $\mu(\Om\sm \Om_0)=0$.

If $\omega \in \Om_0$, then $\omega \notin \Om_{m_j,j}$ for every $j$ and some $m_j$, and therefore
\begin{equation}\label{eq52}
\sup_n\frac 1n \sum_{k=0}^{n-1}|f-f_{m_j}|(\tau^k(\omega))\leq \frac 1j \text{ \ for all \ } j \text{ \ and \ } \omega\in \Om_0.
\end{equation}

Now, by Theorem \ref{t51}, there exist $\Om_{j,i}\su \Om$ with $\mu(\Om \sm \Om_{j,i})=0$ such that for every $(Y,\nu,\phi)$ and
$g\in \cal L^\ii(Y)$ the averages
$$
\frac 1n\sum_{k=0}^{n-1}\chi_{A_{m_j,i}}(\tau^k(\omega))g(\phi^ky)
$$
converge $\nu-$a.e. for all $\omega\in \Om_{j,i}$. Then, letting
$$
\Om_f=\bigcap\limits_{j=1}^\ii\bigcap\limits_{i=1}^{l_{m_j}}\Om_{j,i}\bigcap \Om_0,
$$
we obtain $\mu(\Om \sm \Om_f)=0$.

If we pick any $(Y,\nu,\phi)$ and $g\in \cal L^\ii(Y)$, then the averages $a_n(f_{m_j},g)(\omega,y)$
converge $\nu-$a.e. for every $j$ and all $\omega\in \Om_f$, and it follows that there are
$Y_0\su Y$ with $\nu(Y\sm Y_0)=0$ and $C>0$
such that $|g(\phi^ky)|\leq C$ for all $k$ and $y\in Y_0$ and
$$
\liminf_n\operatorname{Re}a_n(f_{m_j},g)(\omega,y)=\limsup_n\operatorname{Re}a_n(f_{m_j},g)(\omega,y),
$$
$$
\liminf_n\operatorname{Im}a_n(f_{m_j},g)(\omega,y)=\limsup_n\operatorname{Im}a_n(f_{m_j},g)(\omega,y),
$$
for all $y\in Y_0$, $k$, and $\omega\in \Om_f$.

Let $\omega\in \Om_f$ and $y\in Y_0$. Given $k$, taking into account (\ref{eq52}), we have
$$
\Delta(\omega,y)=\limsup_n\operatorname{Re}a_n(f,g)(\omega,y)-\liminf_n\operatorname{Re}a_n(f,g)(\omega,y)=
$$
$$
\limsup_n\operatorname{Re}a_n(f-f_{m_j},g)(\omega,y)-\liminf_n\operatorname{Re}a_n(f-f_{m_j},g)(\omega,y)\leq
$$
$$
\leq 2\sup_n a_n(|f-f_{m_j}|,|g|)(\omega,y)\leq 2C \sup_n\frac 1n \sum_{k=0}^{n-1}|f-f_{m_j}|(\tau^k(\omega))\leq \frac {2C}j.
$$
Therefore $\Delta(\omega,y)=0$. Similarly,
$$
\limsup_n\operatorname{Im}a_n(f,g)(\omega,y)=\liminf_n\operatorname{Im}a_n(f,g)(\omega,y),
$$
and we conclude that the averages (\ref{eq51}) converge $\nu-$a.e. and all $\omega\in \Om_f$.

If $\mu$ is an arbitrary measure, we observe that, since $f\in \cal L^1(\Om)$, the restriction of $\mu$ on
the set $\{\omega \in \Om: f(\tau^k(\omega))\neq 0\}$
is $\sigma-$finite for each $k$, which reduces the argument to the case of $\sigma-$finite measure space $(\Om, \mu)$.
\end{proof}

Now we extend Theorem \ref {t52} to $\cal R_\mu=\cal R_\mu(\Om)$.
\begin{teo}\label{t53}
Let $(\Om, \mu)$ be a measure space, $\tau: \Om \to \Om$ be a m.p.t., and let $f\in \cal R_\mu$.
Then there exists a set $\Om_f\su \Om$ with $\mu(\Om \sm \Om_f)=0$ such that for any finite measure space $(Y,\nu)$, any
measure preserving transformation $\phi: Y\to Y$, and any $g\in \cal L^\ii(Y)$ the averages (\ref{eq51}) converge $\nu-$a.e.
for all $\omega \in \Om_f$.
\end{teo}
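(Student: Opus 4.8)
The plan is to follow the pattern used in Theorems \ref{t3} and \ref{t41} for passing from $\cal L^1$ to $\cal R_\mu$: approximate $f\in\cal R_\mu$ in $\cal L^{\ii}$-norm by $\cal L^1$-functions via Proposition \ref{p22}, apply Theorem \ref{t52} to the $\cal L^1$-pieces, and absorb the remaining error using the fact that $g$ is bounded on $Y$. Note that, unlike in Theorems \ref{t3} and \ref{t41}, the conclusion here makes no claim about where the limit lies, so no rearrangement or Fatou-property argument is needed.

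First I would reduce to $\sigma$-finite $\mu$. By Proposition \ref{p22}, $\{|f|>0\}=\bigcup_n\{|f|>1/n\}$ is a countable union of sets of finite measure, hence so is $\bigcup_{k\ge0}\{\om\in\Om:\ f(\tau^k\om)\neq0\}$ since $\tau$ preserves $\mu$; off this set all averages (\ref{eq51}) vanish. Next, for each $k=1,2,\dots$ Proposition \ref{p22} yields $g_k\in\cal L^1(\Om)$ and $h_k\in\cal L^{\ii}(\Om)$ with $f=g_k+h_k$ and $\|h_k\|_{\ii}\le 1/k$; taking $h_k=f\cdot\chi_{\{|f|\le 1/k\}}$ we may assume $|h_k|\le 1/k$ pointwise. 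By Theorem \ref{t52} there is $\Om_{g_k}\su\Om$ with $\mu(\Om\sm\Om_{g_k})=0$ such that for every finite measure space $(Y,\nu)$, every m.p.t. $\phi:Y\to Y$, and every $g\in\cal L^{\ii}(Y)$, the averages $a_n(g_k,g)(\om,\cdot)$ converge $\nu$-a.e. for all $\om\in\Om_{g_k}$; and since $\tau$ preserves $\mu$ there is a set $\Om_k'$ of full measure with $|h_k(\tau^j\om)|\le 1/k$ for all $j\ge0$ and $\om\in\Om_k'$. Put $\Om_f=\bigcap_k(\Om_{g_k}\cap\Om_k')$, so $\mu(\Om\sm\Om_f)=0$.

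For the core step, fix $\om\in\Om_f$ and an arbitrary system $(Y,\nu,\phi)$ together with $g\in\cal L^{\ii}(Y)$; set $C=\|g\|_{\ii}$ and pick $Y_0\su Y$ of full measure on which $|g(\phi^jy)|\le C$ for all $j$ and $a_n(g_k,g)(\om,y)$ converges for every $k$. For $y\in Y_0$ and each $k$, decomposing $a_n(f,g)=a_n(g_k,g)+a_n(h_k,g)$ and using
$$
|a_n(h_k,g)(\om,y)|\le\frac1n\sum_{j=0}^{n-1}|h_k(\tau^j\om)|\,|g(\phi^jy)|\le\frac Ck,
$$
one gets $0\le\limsup_n\operatorname{Re}a_n(f,g)(\om,y)-\liminf_n\operatorname{Re}a_n(f,g)(\om,y)\le 2C/k$, and the same bound for the imaginary part; letting $k\to\ii$ shows that $a_n(f,g)(\om,y)$ converges for all $y\in Y_0$, i.e.\ $\nu$-a.e. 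Since $\om\in\Om_f$ and $(Y,\nu,\phi,g)$ were arbitrary, this proves the theorem.

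I do not expect any genuine analytic difficulty; the only point needing care is the uniformity of the single exceptional set $\Om_f$ over all auxiliary systems $(Y,\nu,\phi)$ and all $g\in\cal L^{\ii}(Y)$. For the $\cal L^1$-pieces $g_k$ this uniformity is precisely what Theorem \ref{t52} delivers, and the $h_k$-error is handled by the crude pointwise estimate above, which is uniform in $(Y,\nu,\phi,g)$ once $\|g\|_{\ii}$ is fixed; it is exactly here that the hypothesis $g\in\cal L^{\ii}(Y)$ — rather than merely $g\in\cal L^1(Y)$ — is used.
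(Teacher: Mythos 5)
Your proposal is correct and follows essentially the same route as the paper: decompose $f=g_k+h_k$ via Proposition \ref{p22}, apply Theorem \ref{t52} to the $\cal L^1$-parts, and kill the $h_k$-error on a common full-measure set. The only cosmetic difference is that you bound the error term by $2\|g\|_\ii/k$ directly, while the paper bounds it by $\frac 2m\sup_n\frac 1n\sum_{j=0}^{n-1}|g|(\phi^j y)$ (finite $\nu$-a.e.\ by Birkhoff), which would even accommodate $g\in\cal L^1(Y)$; for the stated hypothesis $g\in\cal L^\ii(Y)$ both are equally valid.
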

\begin{proof}
Due to Proposition \ref{p22}, given a natural $m$, there exists $f_m\in \cal L^1(\Om)$ and $h_m\in \cal L^\infty(\Om)$ such that
$f=f_m+h_m$ and $\| h_m\|_\infty \leq \frac 1m$. Then there is $\Om_0 \su \Om$ such that $\mu(\Om \sm \Om_0)=0$ and
$h_m(\omega)\leq \frac 1m$ for all $m$ and $\omega\in \Om_0$.

 By Theorem \ref{t52}, as $\{ f_m\}_{m=1}^\infty \su \cal L^1(\Om)$, for every $m$ there is a set $\Om_m \su \Om$ with
$\mu(\Om \sm \Om_m)=0$ such that for every $(Y,\nu,\phi)$ and $g\in \cal L^1(Y)$ the averages
\begin{equation}\label{eq53}
a_n(f_m,g)(\omega,y)=\frac 1n\sum_{k=0}^{n-1}f_m(\tau^k(\omega))g(\phi^k(y))
\end{equation}
converge $\nu-$a.e. for all $\omega\in \Om_m$. Therefore, if $\Om_f=\cap_{m=0}^\infty \Om_m$,
then $\mu(\Om \sm \Om_f)=0$, $h_m(\omega)\leq \frac 1m$ for all $m$ and $\omega\in \Om_f$,
and for every $(Y,\nu,\phi)$ and $g\in L^1(Y)$, the averages (\ref{eq53}) converge $\nu-$a.e. for all $m$ and $\omega\in \Om_f$.

Fix $\omega\in \Om_f$, $(Y,\nu,\phi)$, $g\in \cal L^1(Y,\nu)$ and show that the averages (\ref{eq51}) converge $\nu-$a.e.
As the averages (\ref{eq53}) converge $\nu-$a.e. for each $m$, there is a set $Y_1\su Y$ with $\nu(Y\sm Y_1)=0$ such that
the sequence (\ref{eq53}) converges for every $m$ and $y\in Y_1$. Also, since the averages
$$
\frac 1n\sum_{k=0}^{n-1}|g|(\phi^k(y))
$$
converge $\nu-$a.e., there is a set $Y_2\su Y$ such that $\nu(Y\sm Y_2)=0$ and the sequence
$\frac 1n\sum_{k=0}^{n-1}|g|(\phi^k(y))$ converges for all $y\in Y_2$. Then, letting $Y_0=Y_1\cap Y_2$, we conclude that
$\nu(Y\sm Y_0)=0$,  \ $\frac 1n\sum_{k=0}^{n-1}|g|(\phi^ky)<\infty$, and the sequence (\ref{eq53}) converges for all $m$ and $y\in Y_0$.

Now, if $y\in Y_0$, we have
$$
\liminf_n a_n(f_m,g)(\omega,y)=\limsup_n a_n(f_m,g)(\omega,y),
$$
which implies that, for every $m$,
$$
\Delta(\omega)=\limsup_n a_n(f,g)(\omega,y)-\liminf_na_n(f,g)(\omega,y)=
$$
$$
=\limsup_n a_n(h_m,g)(\omega,y)-\liminf_na_n(h_m,g)(\omega,y)\leq
$$
$$
\leq 2\sup_n \frac 1n\sum_{k=0}^{n-1}|h_m(\tau^k(\omega))|\cdot |g(\phi^k(y))|\leq \frac 2m\sup_n \frac 1n\sum_{k=0}^{n-1}|g|(\phi^k(y)).
$$
Therefore $\Delta(\omega)=0$ for every $y\in Y_0$, that is, the averages (\ref{eq51})
converge $\nu-$a.e.
\end{proof}

Letting in Theorem \ref{t53} \ $Y=\mathbb C_1=\{ y\in \mathbb C: |y|=1\}$ with Lebesgue measure
$\nu$, $\phi_\lb(y)=\lb y$, $y\in Y$,
for a given $\lb \in Y$, and $g(y)=y$ whenever $y\in Y$, we obtain Wiener-Wintner theorem for $\cal R_\mu$.

\begin{teo}\label{t54} Let $(\Om, \mu)$ be a measure space,
$\tau: \Om \to \Om$ be a m.p.t. If $f\in \cal R_\mu$, then
there is a set $\Om_f\su \Om$ with $\mu(\Om \sm \Om_f)=0$ such that the sequence
$$
\frac 1n \sum_{k=0}^{n-1} \lb^kf(\tau^k(\omega))
$$
converges for all $\omega\in \Om_f$ and $\lb\in \mathbb C_1$.
\end{teo}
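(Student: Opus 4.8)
The plan is to obtain Theorem \ref{t54} as an immediate specialization of Theorem \ref{t53}, with no new analysis required. Fix $f\in\cal R_\mu$ and let $\Om_f\su\Om$ be the set furnished by Theorem \ref{t53}, so that $\mu(\Om\sm\Om_f)=0$ and, crucially, this one set works simultaneously for \emph{every} finite measure space $(Y,\nu)$, \emph{every} measure preserving transformation $\phi:Y\to Y$, and \emph{every} $g\in\cal L^\ii(Y)$.

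Next I would choose the dynamical system on the circle. Take $Y=\mathbb C_1$ equipped with normalized arc-length (Lebesgue) measure $\nu$; for each fixed $\lb\in\mathbb C_1$ the rotation $\phi_\lb(y)=\lb y$ is a measure preserving transformation of $(Y,\nu)$, and the function $g(y)=y$ lies in $\cal L^\ii(Y)$ with $\|g\|_\ii=1$. With these choices $g(\phi_\lb^k(y))=\lb^k y$, and hence
$$
a_n(f,g)(\om,y)=\frac 1n\sum_{k=0}^{n-1}f(\tau^k(\om))\lb^k y = y\cdot\frac 1n\sum_{k=0}^{n-1}\lb^k f(\tau^k(\om)).
$$

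Now I would apply Theorem \ref{t53} to this $(Y,\nu,\phi_\lb,g)$: for every $\om\in\Om_f$ and every $\lb\in\mathbb C_1$ the averages $a_n(f,g)(\om,y)$ converge for $\nu$-a.e. $y\in Y$. Since $\nu(Y)>0$, the set of such $y$ is nonempty; picking any $y_0$ in it (and noting $y_0\neq 0$ because $y_0\in\mathbb C_1$) and dividing the displayed identity by $y_0$ shows that $\frac 1n\sum_{k=0}^{n-1}\lb^k f(\tau^k(\om))$ converges. As $\om\in\Om_f$ and $\lb\in\mathbb C_1$ were arbitrary and $\Om_f$ does not depend on $\lb$, this is precisely the conclusion of the theorem.

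The only point that needs attention — and the reason Theorem \ref{t53} was stated in its ``uniform'' form — is that the exceptional set $\Om_f$ must be independent of $\lb$; this is automatic here, since Theorem \ref{t53} already quantifies over all admissible $(Y,\nu,\phi,g)$ once $\Om_f$ has been fixed. Beyond this there is essentially no obstacle: the measure-preservation of each rotation $\phi_\lb$ and the boundedness of $g(y)=y$ are trivial, so the proof is a one-line reduction to Theorem \ref{t53}.
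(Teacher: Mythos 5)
Your proof is correct and is exactly the paper's argument: the authors also obtain Theorem \ref{t54} by specializing Theorem \ref{t53} to $Y=\mathbb C_1$ with Lebesgue measure, $\phi_\lb(y)=\lb y$, and $g(y)=y$, relying on the fact that the exceptional set $\Om_f$ from Theorem \ref{t53} is uniform over all choices of $(Y,\nu,\phi,g)$. Your write-up merely makes explicit the factoring out of $y$ and the division by a nonzero $y_0$, which the paper leaves implicit.
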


Let $P(k)=\sum_{j=1}^s z_j\lb_j^k, \ k=0,1,2,\dots$ be a trigonometric polynomial (see Section 5).
 Then, by linearity, Theorem \ref{t54} implies the following.

\begin{cor}\label{c51}
If $(\Om,\mu)$ is a measure space,  $\tau: \Om \to \Om$ is a m.p.t. and  $f\in \cal R_\mu$, then there is
a set $\Om_f\su \Om$ with $\mu(\Om \sm \Om_f)=0$ such that the sequence
$$
a_n(\{P(k)\},f)(\omega)=\frac 1n\sum_{k=0}^{n-1}P(k)f(\tau^k(\omega))
$$
converges for every $\omega \in \Om_f$ and any trigonometric polynomial $\{P(k)\}$.
\end{cor}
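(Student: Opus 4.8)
The plan is to reduce everything to Theorem \ref{t54} via linearity; there is essentially no work beyond organizing the quantifiers correctly. First I would fix $f\in\cal R_\mu$ and take the set $\Om_f\su\Om$ furnished by Theorem \ref{t54}, so that $\mu(\Om\sm\Om_f)=0$ and the sequence
$$
\frac1n\sum_{k=0}^{n-1}\lb^k f(\tau^k(\omega))
$$
converges for every $\omega\in\Om_f$ and every $\lb\in\mathbb C_1$. The crucial feature I would stress is that this single null set is valid \emph{simultaneously} for all points $\lb$ of the unit circle — this uniformity is exactly the content of Theorem \ref{t54}, and it is what allows $\Om_f$ to depend only on $f$ and not on the polynomial $P$.

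Next, given any $\omega\in\Om_f$ and any trigonometric polynomial $P(k)=\sum_{j=1}^s z_j\lb_j^k$ with $z_j\in\mathbb C$ and $\lb_j\in\mathbb C_1$, I would expand the weighted average as a finite linear combination:
$$
a_n(\{P(k)\},f)(\omega)=\frac1n\sum_{k=0}^{n-1}\Bigl(\sum_{j=1}^s z_j\lb_j^k\Bigr)f(\tau^k(\omega))=\sum_{j=1}^s z_j\Bigl(\frac1n\sum_{k=0}^{n-1}\lb_j^k f(\tau^k(\omega))\Bigr).
$$
Since $\omega\in\Om_f$, each of the finitely many inner averages converges as $n\to\infty$ by Theorem \ref{t54} (the $\lb_j$ need not be distinct, and $s$ is arbitrary, but this is irrelevant since the sum is finite). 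Hence $a_n(\{P(k)\},f)(\omega)$, being a fixed $\mathbb C$-linear combination of convergent sequences, converges as well.

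I do not expect any genuine obstacle: all the analytic content sits in Theorem \ref{t54}, whose point is precisely that the exceptional null set can be chosen independently of $\lb$. The corollary is then merely the observation that the definition of a trigonometric polynomial exhibits $\{P(k)\}$ as a finite $\mathbb C$-combination of the sequences $\{\lb_j^k\}$, together with the trivial fact that finite linear combinations of convergent numerical sequences converge. If one wished, one could even phrase the statement for the larger class of all sequences that are uniform limits of trigonometric polynomials, but the finite case is what is claimed here and it follows immediately.
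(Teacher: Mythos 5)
Your proposal is correct and is exactly the paper's argument: the paper derives Corollary \ref{c51} from Theorem \ref{t54} ``by linearity,'' relying precisely on the fact that the exceptional null set in Theorem \ref{t54} is uniform in $\lb\in\mathbb C_1$, so that a trigonometric polynomial weight reduces to a finite linear combination of convergent sequences. Your write-up simply makes the one-line deduction explicit.
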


We will need the following.

\begin{pro}\label{p51}
If $(\Om,\mu)$ is a measure space,  $\tau: \Om \to \Om$ is a m.p.t. and  $f\in \cal L^1(\Om)\cap \cal L^\ii(\Om)$,
then there exists a set $\Om_f\su \Om$ with $\mu(\Om \sm \Om_f)=0$ such that the sequence
$\frac 1n \sum_{k=0}^{n-1}b_kf(\tau^k(\omega))$ converges for every $\omega\in \Om_f$ and any bounded Besicovitch sequence $\{ b_k\}$.
\end{pro}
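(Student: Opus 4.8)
The plan is to combine the already-established Wiener--Wintner-type result for trigonometric polynomials, Corollary \ref{c51}, with the elementary $L^\ii$-bound on the orbit of $f$ via a routine approximation ($\ep/3$) argument. The whole content of the proposition is that a \emph{single} null set serves \emph{all} bounded Besicovitch sequences simultaneously, and this comes precisely from the fact that Corollary \ref{c51} produces one null set valid for \emph{all} trigonometric polynomials.

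First I would fix the null set. Since $f\in\cal L^1(\Om)\cap\cal L^\ii(\Om)\su\cal R_\mu$, Corollary \ref{c51} supplies a set $\Om_1\su\Om$ with $\mu(\Om\sm\Om_1)=0$ such that $\frac1n\sum_{k=0}^{n-1}P(k)f(\tau^k(\omega))$ converges for every $\omega\in\Om_1$ and every trigonometric polynomial $\{P(k)\}$. Next, because $\tau$ is measure preserving and $f\in\cal L^\ii$, for each fixed $k$ one has $\mu\{|f\circ\tau^k|>\|f\|_\ii\}=\mu\{|f|>\|f\|_\ii\}=0$; taking the union of these null sets over $k=0,1,2,\dots$ yields a set $\Om_2$ with $\mu(\Om\sm\Om_2)=0$ off of which $|f(\tau^k(\omega))|\leq\|f\|_\ii$ for all $k$, hence
$$
\sup_n\frac1n\sum_{k=0}^{n-1}|f(\tau^k(\omega))|\leq\|f\|_\ii\qquad\text{for all }\omega\in\Om_2 .
$$
(One could instead invoke the Dunford--Schwartz pointwise ergodic theorem, Theorem \ref{t2}, for the operator $g\mapsto g\circ\tau\in DS$ applied to $|f|\in\cal L^1$, but the crude bound above is enough.) Put $\Om_f=\Om_1\cap\Om_2$, so $\mu(\Om\sm\Om_f)=0$.

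Now fix $\omega\in\Om_f$ and a bounded Besicovitch sequence $\{b_k\}$ with $|b_k|\leq C<\ii$, and write $a_n=\frac1n\sum_{k=0}^{n-1}b_kf(\tau^k(\omega))$. Given $\ep>0$, choose, as in (\ref{eq041}), a trigonometric polynomial $P$ with $\limsup_n\frac1n\sum_{k=0}^{n-1}|b_k-P(k)|<\ep$. Then for every $n$,
$$
\left|a_n-\frac1n\sum_{k=0}^{n-1}P(k)f(\tau^k(\omega))\right|\leq\frac1n\sum_{k=0}^{n-1}|b_k-P(k)|\,|f(\tau^k(\omega))|\leq\|f\|_\ii\cdot\frac1n\sum_{k=0}^{n-1}|b_k-P(k)|,
$$
whence $\limsup_n$ of the left-hand side is at most $\|f\|_\ii\,\ep$. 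Since $\big\{\frac1n\sum_{k=0}^{n-1}P(k)f(\tau^k(\omega))\big\}_n$ converges (as $\omega\in\Om_1$), it is Cauchy, and combining this with the previous estimate gives $\limsup_{n,m\to\ii}|a_n-a_m|\leq 2\|f\|_\ii\,\ep$. As $\ep>0$ is arbitrary, $\{a_n\}$ is Cauchy, hence convergent.

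I do not anticipate a genuine obstacle: once Corollary \ref{c51} is available the argument is the standard density argument behind every Besicovitch-weighted pointwise theorem. The only point demanding attention is the order of quantifiers — the set $\Om_f$ must be produced before, and independently of, the sequence $\{b_k\}$ — and this is handled exactly by the uniformity of Corollary \ref{c51} over all trigonometric polynomials, which is why that corollary is the essential input here.
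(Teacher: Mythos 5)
Your proposal is correct and follows essentially the same route as the paper: both build the single null set from Corollary \ref{c51} (uniform over all trigonometric polynomials) intersected with the set where $|f(\tau^k(\omega))|\leq\|f\|_\ii$ for all $k$, and then run the standard Besicovitch approximation estimate. The only cosmetic difference is that you close with a Cauchy-criterion argument on the complex averages, whereas the paper bounds $\limsup-\liminf$ of the real and imaginary parts separately; the two are equivalent here.
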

\begin{proof}
By Corollary \ref{c51}, there exists a set $\Om_{f,1}\su \Om$, $\mu(\Om\sm \Om_{f,1})=0$, such that the sequence
$\frac 1n\sum_{k=0}^{n-1}P(k)f(\tau^k\omega)$ converges for every $\omega\in \Om_{f,1}$ and any trigonometric polynomial $\{P(k)\}$.
Also, since $f\in \cal L^\ii(\Om)$, there is a set $\Om_{f,2}\su \Om$, $\mu(\Om\sm \Om_{f,2})=0$,
such that $|f(\tau^k\om)|\leq \| f\|_\ii$ for every
$k$ and $\om\in \Om_{f,2}$. If we set $\Om_f=\Om_{f,1}\cap \Om_{f,2}$, then $\mu(\Om\sm \Om_f)=0$.

Now, let $\om\in \Om_f$, and let $\{b_k\}$ be a Besicovitch sequence. Fix $\ep>0$, and choose a trigonometric polynomial
$P(k)$ to satisfy condition (\ref{eq041}). Then we have
$$
\Delta(\omega)=\limsup_n\operatorname{Re}a_n(\{b_k\},f)(\om)-\liminf_n\operatorname{Re}a_n(\{b_k\},f)(\om)=
$$
$$
=\limsup_n\operatorname{Re}a_n(\{b_k-P(k)\},f)(\om)-\liminf_n\operatorname{Re}a_n(\{b_k-P(k)\},f)(\om)\leq
$$
$$
\leq 2\| f\|_\ii\sup_n\frac 1n \sum_{k=0}^{n-1}|b_k-P(k)|<2\| f\|_\ii\ep
$$
for all sufficiently large $n$. Therefore $\Delta(\om)=0$, and we conclude that the sequence
$\operatorname{Re}\frac 1n \sum_{k=0}^{n-1}b_kf(\tau^k\om)$ converges.
Similarly, we obtain convergence of the sequence $\operatorname{Im} \frac 1n \sum_{k=0}^{n-1}b_kf(\tau^k\om)$,
which completes the proof.
\end{proof}

\begin{teo}\label{t56}
Let $(\Om,\mu)$ be a measure space. If $f\in \cal L^1(\Om)$, then there exists a set $\Om_f\su \Om$
with $\mu(\Om\sm \Om_f)=0$, such that the sequence
\begin{equation}\label{eq61}
a_n(\{b_k\},f)(\om)=\frac 1n\sum_{k=0}^{n-1}b_kf(\tau^k\om)
\end{equation}
converges for every $\om\in \Om_f$ and any bounded Besicovitch sequence $\{ b_k\}$.
\end{teo}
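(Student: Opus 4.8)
The plan is to repeat the scheme of the proof of Theorem \ref{t52}, using Proposition \ref{p51} in place of the Return Times theorem and the Hopf maximal ergodic inequality to pass from $\cal L^1\cap\cal L^{\ii}$ to $\cal L^1$. First I would reduce to the $\sigma$-finite case: since $f\in\cal L^1(\Om)$, for each $k\geq 0$ the set $\{\om\in\Om:\ f(\tau^k\om)\neq 0\}$ has $\sigma$-finite measure, hence so does their union over $k$; off this union every average (\ref{eq61}) vanishes identically, so we may assume $(\Om,\mu)$ is $\sigma$-finite.

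Next, choose simple functions $f_m=\sum_{i=1}^{l_m}\lb_{m,i}\chi_{A_{m,i}}$ with $\mu(A_{m,i})<\ii$ and $\|f-f_m\|_1\to 0$; each $f_m$ lies in $\cal L^1\cap\cal L^{\ii}$. For a fixed $j$, set
$$
\Om_{m,j}=\left\{\om\in\Om:\ \sup_n\frac 1n\sum_{k=0}^{n-1}|f-f_m|(\tau^k\om)>\frac 1j\right\};
$$
by the maximal ergodic inequality $\mu(\Om_{m,j})\leq j\|f-f_m\|_1$, so $\mu\big(\bigcap_m\Om_{m,j}\big)=0$. Putting $\Om_0=\Om\sm\bigcup_j\bigcap_m\Om_{m,j}$ we get $\mu(\Om\sm\Om_0)=0$, and for every $\om\in\Om_0$ and every $j$ there is $m_j$ with $\sup_n\frac1n\sum_{k=0}^{n-1}|f-f_{m_j}|(\tau^k\om)\leq\frac1j$. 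By Proposition \ref{p51}, for each $m$ there is $\Om_m'\su\Om$ with $\mu(\Om\sm\Om_m')=0$ such that $\frac1n\sum_{k=0}^{n-1}b_kf_m(\tau^k\om)$ converges for every $\om\in\Om_m'$ and every bounded Besicovitch sequence $\{b_k\}$. Let $\Om_f=\Om_0\cap\bigcap_m\Om_m'$, so that $\mu(\Om\sm\Om_f)=0$.

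Finally, fix $\om\in\Om_f$ and a bounded Besicovitch sequence $\{b_k\}$, say $|b_k|\leq C$ for all $k$. For each $j$ the averages $a_n(\{b_k\},f_{m_j})(\om)$ converge, so subtracting this convergent sequence and using $|b_k|\leq C$ gives
$$
\limsup_n\operatorname{Re}a_n(\{b_k\},f)(\om)-\liminf_n\operatorname{Re}a_n(\{b_k\},f)(\om)\leq 2C\sup_n\frac 1n\sum_{k=0}^{n-1}|f-f_{m_j}|(\tau^k\om)\leq\frac{2C}{j};
$$
letting $j\to\ii$ shows this difference is $0$, and the same argument applies to the imaginary part, so $a_n(\{b_k\},f)(\om)$ converges for every $\om\in\Om_f$ and every bounded Besicovitch sequence.

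I expect no essential obstacle here, since the argument runs parallel to that of Theorem \ref{t52}; the one point deserving care is that the null set $\Om_f$ must be chosen independently of the Besicovitch sequence. This is automatic: Proposition \ref{p51} already furnishes, for each $m$, a single null set valid simultaneously for all bounded Besicovitch sequences, while the construction of $\Om_0$ via the maximal inequality involves only $f$ and the approximants $f_m$. The constant $C$ in the last estimate depends on the chosen sequence, but this is harmless, as one fixes a sequence and then lets $j\to\ii$.
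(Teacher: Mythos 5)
Your argument is correct and follows essentially the same route as the paper's proof: approximate $f$ in $\cal L^1$ by functions in $\cal L^1\cap\cal L^{\ii}$, use the maximal ergodic inequality to extract a subsequence with uniformly controlled maximal functions on a full-measure set $\Om_0$, apply Proposition \ref{p51} to the approximants, and conclude by the standard $\limsup$--$\liminf$ estimate. The only cosmetic differences are that you make the $\sigma$-finiteness reduction explicit up front and take the approximants to be simple functions, neither of which changes the substance.
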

\begin{proof}
Let a sequence $\{ f_m\}\su \cal L^1(\Om)\cap \cal L^\ii(\Om)$ be such that $\| f-f_m\|_1\to 0$.
As in proof of Theorem \ref{t52}, we
construct a subsequence $\{ f_{m_j}\}$ and a set $\Om_0\su \Om$ with $\mu(\Om\sm \Om_0)=0$ such that
$$
\sup_n\frac 1n \sum_{k=0}^{n-1}|f-f_{m_j}|(\tau^k\om)\leq \frac 1j \text{ \ for all \ } j \text{ \ and \ } \om\in \Om_0.
$$
By Proposition \ref{p51}, given $j$, there is $\Om_j\su \Om$ with $\mu(\Om\sm \Om_j)=0$ such that the sequence
$\frac 1n\sum_{k=0}^{n-1}b_kf_{m_j}(\tau^k\om)$ converges for every $\om\in \Om_j$ and any Besicovitch sequence $\{ b_k\}$.

If we set $\Om_f=\cap_{j=1}^\ii \Om_j \cap \Om_0$, then $\mu(\Om\sm \Om_f)=0$, and for any $\om\in \Om_f$ and any bounded
Besicovitch sequence $\{ b_k\}$ such that $\sup_k|b_k|\leq C$ we have
$$
\Delta(\omega)=\limsup_n\operatorname{Re}a_n(\{b_k\},f)(\om)-\liminf_n\operatorname{Re}a_n(\{b_k\},f)(\om)=
$$
$$
=\limsup_n\operatorname{Re}a_n(\{b_k\},f-f_{m_j})(\om)-\liminf_n\operatorname{Re}a_n(\{b_k\},f-f_{m_j})(\om)\leq
$$
$$
\leq 2\sup_n\frac 1n \sum_{k=0}^{n-1}|b_k|\cdot |f-f_{m_j}|(\tau^k\om)\leq \frac {2C}j.
$$
Therefore $\Delta(\omega)=0$, hence the sequence $\operatorname{Re}\frac 1n\sum_{k=0}^{n-1}b_kf(\tau^k\om)$ converges.
Similarly, we derive convergence of the sequence $\operatorname{Im}\frac 1n\sum_{k=0}^{n-1}b_kf(\tau^k\om)$, and the
proof is complete.
\end{proof}

Taking into account that the sequence $\{ b_k\}$
is bounded, we obtain, as in the proof of Theorem \ref{t53}, the following extension of Wiener-Wintner theorem.

\begin{teo}\label{t57}
Let $(\Om,\mu)$ be a measure space, and let $\tau:\Om\to \Om$ be a m.p.t. Given $f\in \cal R_\mu$,
then there exists a set $\Om_f\su \Om$ with $\mu(\Om\sm \Om _f)=0$ such that the sequence (\ref{eq61})
converges for every $\om\in \Om_f$ and every bounded Besicovitch sequence $\{ b_k\}$.
\end{teo}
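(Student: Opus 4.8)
The plan is to follow the scheme of the proof of Theorem~\ref{t53} almost verbatim, with the sampled values $g(\phi^k y)$ replaced by the Besicovitch weights $b_k$ and with Theorem~\ref{t56} invoked in place of Theorem~\ref{t52}. First I would fix $f\in\cal R_\mu$ and, for each $m\in\mathbb N$, use Proposition~\ref{p22} to split $f=f_m+h_m$ with $f_m\in\cal L^1(\Om)$, $h_m\in\cal L^\ii(\Om)$ and $\|h_m\|_\ii\le 1/m$. Since $\tau$ is measure preserving, for each $m$ the set $\{\om:\ |h_m(\tau^k\om)|\le 1/m\ \text{for all }k\}$ has full measure; intersecting these sets over $m$ yields a single $\Om_0\su\Om$ with $\mu(\Om\sm\Om_0)=0$ on which $|h_m(\tau^k\om)|\le 1/m$ for all $m$ and $k$ simultaneously.

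Next I would apply Theorem~\ref{t56} to each $f_m\in\cal L^1(\Om)$, obtaining sets $\Om_m\su\Om$ with $\mu(\Om\sm\Om_m)=0$ such that $\tfrac1n\sum_{k=0}^{n-1}b_kf_m(\tau^k\om)$ converges for every $\om\in\Om_m$ and every bounded Besicovitch sequence $\{b_k\}$. The feature that makes the whole argument go through is that the null set furnished by Theorem~\ref{t56} is already uniform over all Besicovitch sequences, so no further intersection over the weights is required. Setting $\Om_f=\Om_0\cap\bigcap_{m\ge1}\Om_m$ gives $\mu(\Om\sm\Om_f)=0$.

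Finally, I would fix $\om\in\Om_f$ and a bounded Besicovitch sequence $\{b_k\}$ with $\sup_k|b_k|\le C$. By linearity of $a_n(\{b_k\},\cdot)(\om)$ together with the convergence of $a_n(\{b_k\},f_m)(\om)$, for every $m$ one has
$$
\limsup_n\operatorname{Re}a_n(\{b_k\},f)(\om)-\liminf_n\operatorname{Re}a_n(\{b_k\},f)(\om)=\limsup_n\operatorname{Re}a_n(\{b_k\},h_m)(\om)-\liminf_n\operatorname{Re}a_n(\{b_k\},h_m)(\om)\le 2\sup_n\frac1n\sum_{k=0}^{n-1}|b_k|\,|h_m(\tau^k\om)|\le\frac{2C}{m},
$$
and letting $m\to\ii$ forces this oscillation to vanish; the identical estimate for the imaginary part then gives convergence of $a_n(\{b_k\},f)(\om)$. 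Since $\om\in\Om_f$ and $\{b_k\}$ were arbitrary, this proves the theorem. The only genuinely delicate point---and the closest thing to an obstacle---is making one null set serve all Besicovitch sequences at once; this is automatic here because Theorem~\ref{t56} is already uniform in $\{b_k\}$ and because the terminal bound $2C/m\to0$ regardless of the constant $C$ attached to any particular sequence.
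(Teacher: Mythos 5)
Your proposal is correct and is essentially the paper's own argument: the paper proves Theorem~\ref{t57} by the one-line remark that one repeats the proof of Theorem~\ref{t53} with Theorem~\ref{t56} supplying the $\cal L^1$ case and with the boundedness of $\{b_k\}$ replacing the a.e.\ control of $\frac1n\sum_{k=0}^{n-1}|g|(\phi^k y)$, which is exactly the decomposition $f=f_m+h_m$, the intersection of the null sets from Theorem~\ref{t56}, and the oscillation bound $2C/m$ that you carry out. Your observation that the null set in Theorem~\ref{t56} is already uniform over all bounded Besicovitch sequences, so no further intersection over weights is needed, is precisely the point that makes the scheme work.
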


 Finally, in view of Proposition \ref{p21}, we have the following.
\begin{teo}\label{t58}
Let $(\Om,\mu)$ be an infinite measure space, and let $\tau:\Om\to \Om$ be a m.p.t.
Assume that $E=E(\Om)$ is a fully symmetric space such that $\mathbf 1\notin E$. Then for every $f\in E$
there exists a set $\Om_f\su \Om$ with $\mu(\Om\sm \Om _f)=0$ such that the sequence (\ref{eq61})
converges for every $\om\in \Om_f$ and every bounded Besicovitch sequence $\{ b_k\}$.
\end{teo}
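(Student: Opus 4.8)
The plan is to reduce the assertion directly to Theorem \ref{t57}, which already establishes a.e. convergence of the weighted averages $a_n(\{b_k\},f)$ for every $f\in\cal R_\mu$ and every bounded Besicovitch sequence $\{b_k\}$. The only point needing verification is that the hypotheses on $E$ force the inclusion $E\su\cal R_\mu$; since $(\Om,\mu)$ is infinite and $\mathbf 1\notin E$, this is precisely Proposition \ref{p21}. Hence every $f\in E$ lies in $\cal R_\mu$.

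Given $f\in E\su\cal R_\mu$, I would then invoke Theorem \ref{t57} to obtain a set $\Om_f\su\Om$ with $\mu(\Om\sm\Om_f)=0$ on which the sequence $a_n(\{b_k\},f)(\om)$ converges for every bounded Besicovitch sequence $\{b_k\}$. Since this is exactly the desired conclusion, the proof terminates in one line.

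There is no genuine obstacle remaining at this stage: the substance of the result resides in the chain of preparatory statements, from Theorem \ref{t51} through Theorem \ref{t52}, Theorem \ref{t53}, Corollary \ref{c51}, Proposition \ref{p51} and Theorem \ref{t56} up to Theorem \ref{t57}. There the decomposition $f=f_m+h_m$ with $\|h_m\|_\ii\leq 1/m$ supplied by Proposition \ref{p22} (available because $f\in\cal R_\mu$) lifts the $\cal L^1$- and $\cal L^1\cap\cal L^\ii$-level convergence statements to all of $\cal R_\mu$, while the Return Times theorem for $\sigma$-finite measure, Theorem \ref{t51}, drives the underlying double-averaging convergence. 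One point worth flagging while writing this out: the conclusion here asserts only pointwise a.e. convergence of the weighted averages and makes no claim that the limit belongs to $E$, so, unlike in Theorems \ref{t5} and \ref{t42}, no additional non-increasing rearrangement and Fatou-property argument is needed to close the proof.
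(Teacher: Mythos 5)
Your proof is correct and matches the paper's own argument exactly: the paper also deduces Theorem \ref{t58} from Theorem \ref{t57} by observing that Proposition \ref{p21} gives $E\su\cal R_\mu$ when $\mu(\Om)=\ii$ and $\mathbf 1\notin E$. Your remark that no rearrangement/Fatou argument is needed (since the conclusion does not claim $\widehat f\in E$) is also accurate.
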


\end{document}